\newtheorem{theorem}{Theorem}
\theoremstyle{plain}
\newtheorem{acknowledgement}{Acknowledgement}
\newtheorem{lemma}{Lemma}
\newtheorem{proposition}{Proposition}
\newtheorem{remark}{Remark}
\numberwithin{equation}{section}
\begin{document}
\title[Exponential Sums Along $p-$adic Submanifolds]{Exponential Sums and Polynomial Congruences Along $p-$adic Submanifolds}
\author{Dirk Segers}
\address{University of Leuven, Department of Mathematics, Celestijnenlaan 200B, B-3001
Leuven (Heverlee), Belgium.}
\email{dirk.segers@wis.kuleuven.be }
\author{W. A. Z\'{u}\~{n}iga-Galindo}
\address{Centro de Investigaci\'{o}n y de Estudios Avanzados del I.P.N., Departamento
de Matem\'{a}ticas, Av. Instituto Polit\'{e}cnico Nacional 2508, Col. San
Pedro Zacatenco, M\'{e}xico D.F., C.P. 07360, M\'{e}xico. }
\email{wazuniga@math.cinvestav.edu.mx}
\thanks{The first author is a Postdoctoral Fellow of the Fund for Scientific Research
- Flanders (Belgium). The second author was partially supported by CONACYT
(Mexico), Grant \# 127794.}
\subjclass[2000]{Primary 11L05, 11D79; Secondary 11S40, 14G20}
\keywords{}

\begin{abstract}
In this article, we consider the estimation of exponential sums along the
points of the reduction mod $p^{m}$ of a $p$-adic analytic submanifold of
$\mathbb{Z}_{p}^{n}$. More precisely, we extend Igusa's stationary phase
method to this type of exponential sums. We also study the number of solutions
of a polynomial congruence along the points of the reduction mod $p^{m}$ of a
$p$-adic analytic submanifold of $\mathbb{Z}_{p}^{n}$. In addition, we attach
\ a Poincar\'{e} series \ to these numbers, and establish its rationality. In
this way, we obtain geometric bounds for the number of solutions of the
corresponding polynomial congruences.

\end{abstract}
\maketitle

\section{Introduction}

Let $K$ be a $p$-adic field, i.e. $[K:\mathbb{Q}_{p}]<\infty$. Let $R_{K}$ be
the valuation ring of $K$, $P_{K}$ the maximal ideal of $R_{K}$, and
$\overline{K}=R_{K}/P_{K}$ the residue field of $K$. The cardinality of the
residue field of $K$ is denoted by $q$, thus $\overline{K}=\mathbb{F}_{q}$.
For $z\in K$, $ord(z)\in\mathbb{Z}\cup\{+\infty\}$ denotes the valuation,
$|z|_{K}=q^{-ord(z)}$ the $p$-adic norm, and $ac\,z=z\pi^{-\mathrm{ord}(z)}$
the angular component of $z$, where $\pi$ is a fixed uniformizing parameter of
$R_{K}$.

Let $f_{i}\in K[[x_{1},\ldots,x_{n}]]$ be a formal power series for
$i=1,\ldots,l$, with $l\geq2$, and put $x=(x_{1},\ldots,x_{n})$. Let $U$ be an
open and compact subset of $K^{n}$. Assume that each series $f_{i}$ converges
on $U$. We set
\[
V^{(l-1)}:=V^{(l-1)}\left(  K\right)  :=\{x\in U\mid f_{1}(x)=\cdots
=f_{l-1}(x)=0\}
\]
and assume that $V^{(l-1)}$ is a non-empty closed submanifold of $U$, with
dimension $m:=n-l+1\geq1$, which implies that $n\geq l$. We assume that
$f_{l}$ is not identically zero on $V^{(l-1)}$ and that $f_{l}$ has a zero on
$V^{(l-1)}$.We consider on $V^{(l-1)}$ an analytic differential form $\Theta$
of degree $m$, and denote the measure induced on $V^{(l-1)}$ as $\left\vert
\Theta\right\vert $. Later on, we specialize $\Theta$ to a Gel'fand-Leray form
$\gamma_{GL}$ on $V^{(l-1)}$. Let $\Phi:K^{n}\rightarrow\mathbb{C}$ \ be a
Bruhat-Schwartz function with support in $U$. Let $\omega\in\Omega_{0}\left(
K^{\times}\right)  $ be a quasicharacter of $K^{\times}$, see Section
\ref{setquasichar}. To these data we associate the following local zeta
function:
\begin{align*}
Z_{\Phi}(\omega,V^{(l-1)},f_{l})  &  :=Z_{\Phi}(\omega,f_{1},\ldots
,f_{l},\Theta)\\
&  :=\int\limits_{V^{(l-1)}(K)\smallsetminus f_{l}^{-1}(0)}\Phi\left(
x\right)  \omega\left(  f_{l}(x)\right)  \left\vert \Theta\right\vert .
\end{align*}
This function is holomorphic on $\Omega_{0}\left(  K^{\times}\right)  $, and
has a meromorphic continuation to the whole $\Omega\left(  K^{\times}\right)
$ as a rational function of $t=\omega\left(  \pi\right)  =q^{-s}$. The real
parts of the poles of the meromorphic continuation are negative rational
numbers. These assertions follow directly from Igusa's results from the case
$V^{(l-1)}\left(  K\right)  =K^{n-l+1}\cap U$, \cite[Chap. 8]{I2},
alternatively, see \cite[Proposition 2.2]{Z3}. In section \ref{sect2}, by
using a suitable version of Hironaka's resolution theorem (see Theorem
\ref{thresolsing}), we give a list of candidate poles of the local zeta
function in terms of a certain embedded resolution (see Theorem \ref{Th1}).
The purpose of this paper is sharpening some results of \cite{Z3}, mainly
those connecting the poles of local zeta functions of type $Z_{\Phi}%
(\omega,V^{(l-1)},f_{l},\Theta)$ with the estimation of exponential sums and
with the number of solutions of polynomial congruences along $p$-adic
submanifolds, see Theorems \ref{Th2}, \ref{Th3}, \ref{Th4}. We also answer a
question posed in \cite{Z3} about $Z_{\Phi}(\omega,V^{(l-1)},f_{l},\gamma
_{GL})$ and the Dirac delta function (see Theorem \ref{pro1}).

The zeta functions $Z_{\Phi}(\omega,V^{(l-1)},f_{l},\gamma_{GL})$ were studied
in \cite{Z3}, non-Archimedean case, and in \cite{Has}, Archimedean case with
$l=2$, when $f_{1},\ldots,f_{l}$ are non-degenerate with respect to their
Newton polyhedra, see also \cite{G-S}. But if $V^{(l-1)}$ is replaced by
$K^{n}$, the corresponding zeta functions have been extensively studied by
Weil, Tate, Igusa, Denef, Loeser, among others, see e.g. \cite{D0}, \cite{D1},
\cite{I1}, \cite{I2}.

In \cite[Theorem 4.8]{Z3}, Igusa's method for estimating exponential sums was
extended to exponential sums of type:
\[
E(z):=q^{-m(n-l+1)}%
{\displaystyle\sum\limits_{\overline{x}\in V^{(l-1)}\left(  R_{K}\right)
\operatorname{mod}P_{K}^{m}}}
\Psi\left(  zf_{l}(x)\right)  ,
\]
where $\left\vert z\right\vert _{K}=q^{m\text{ }}$ with $m\in\mathbb{N}$,
$\Psi\left(  \cdot\right)  $ is an additive character of $K$, and $V^{(l-1)}$
is a $p$-adic submanifold of $R_{K}^{n}$ with `good reduction mod $P_{K}$' and
`$f_{1},\ldots,f_{l}$ are non-degenerate with respect to their Newton
polyhedra.' Our main result, and also the main motivation for this paper, is
an extension of Igusa's stationary phase method to exponential sums of type
$E(z)$ without the two above-mentioned conditions,\ see Theorem \ref{Th3}. At
this point, it is worth to mention that the exponential sums along varieties
over finite fields have been extensively studied \cite{A-S}, \cite{Bom},
\cite{Del}, \cite{D-L}, \cite{Gr}, \cite{Per1}, \cite{Per2}, among others. For
exponential sums $\operatorname{mod}p^{m}$, we can mention the references
\cite{Bl}, \cite{D0}, \cite{I1}, \cite{I2}, \cite{Ka}, \cite{Li}, \cite{Mor},
\cite{Z1}, \cite{Z3}, among others. The problem of extending Igusa's
stationary phase method to exponential sums along `varieties
$\operatorname{mod}p^{m} $' was posed by Moreno in \cite{Mor}.

A more \ general problem is to estimate oscillatory integrals of type%

\[
E_{\Phi}(z,V^{(l-1)},f_{l},\Theta):=\int\limits_{V^{(l-1)}(K)}\Phi\left(
x\right)  \Psi\left(  zf_{l}(x)\right)  \mid\Theta\mid,
\]
for $\left\vert z\right\vert _{K}\gg0$. The relevance of studying integrals of
type $E_{\Phi}(z,V^{(l-1)},f_{l},\Theta)$ was pointed out in \cite{Kazh} by
Kazhdan. In this paper we extend Igusa's method to oscillatory integrals of
type $E_{\Phi}(z,V^{(l-1)},f_{l},\Theta)$, more precisely, we show the
existence of an asymptotic expansion for $E_{\Phi}(z,V^{(l-1)},f_{l},\Theta)$,
$\left\vert z\right\vert _{K}\gg0$, which is controlled by the poles of
$Z_{\Phi}(\omega,V^{(l-1)},f_{l},\Theta)$ (see Theorem \ref{Th2}).

We also consider the Poincar\'{e} series associated to the number of solutions
of polynomial congruences along a $p$-adic submanifold of $R_{K}^{n}$ (see
Section \ref{secPoincare}). We show the rationality of a such Poincar\'{e}
series and obtain a bound for the number of solutions of these polynomial
congruences, see Theorem \ref{Th4}, Remark \ref{conjecture}, and \cite{D0},
\cite{De}, \cite{I2}.

\begin{acknowledgement}
The authors thank the anonymous referee for his/her careful reading of the
original version of this paper.
\end{acknowledgement}

\section{Preliminaries}

\subsection{\label{setquasichar}Quasicharacters and local zeta functions}

A quasicharacter of $K^{\times}$ is a continuous group homomorphism from
$K^{\times}$ into $\mathbb{C}^{\times}$. The set of quasicharacters forms an
Abelian group denoted as $\Omega\left(  K^{\times}\right)  $. We set
$\omega_{s}\left(  z\right)  :=\left\vert z\right\vert _{K}^{s}$ for
$s\in\mathbb{C}$ and $z\in K^{\times}$, thus $\omega_{s}\in\Omega\left(
K^{\times}\right)  $. Let now $\omega\in\Omega\left(  K^{\times}\right)  $. If
we choose $s\in\mathbb{C}$\ satisfying $\omega\left(  \pi\right)  =q^{-s}$,
then $\omega\left(  z\right)  =\omega_{s}\left(  z\right)  \chi\left(
ac\text{ }z\right)  $ in which $\chi:=\omega\mid_{R_{K}^{\times}}$ is a
character of $R_{K}^{\times}$, i.e. a continuous group homomorphism from
$R_{K}^{\times}$ into the unit circle of the complex plane. Hence
$\Omega\left(  K^{\times}\right)  $ is a one dimensional complex manifold
since $\Omega\left(  K^{\times}\right)  \cong\mathbb{C\times}\left(
R_{K}^{\times}\right)  ^{\ast}$, where $\left(  R_{K}^{\times}\right)  ^{\ast
}$ is the group of characters of $R_{K}^{\times}$. We note that $\sigma\left(
\omega\right)  :=\operatorname{Re}(s)$ depends only on $\omega$, and
$\left\vert \omega\left(  z\right)  \right\vert =\omega_{\sigma\left(
\omega\right)  }\left(  z\right)  $. We define for every $\sigma\in\mathbb{R}$
an open subset of $\Omega\left(  K^{\times}\right)  $ by
\[
\Omega_{\sigma}\left(  K^{\times}\right)  :=\left\{  \omega\in\Omega\left(
K^{\times}\right)  \mid\sigma\left(  \omega\right)  >\sigma\right\}  .
\]
For further details we refer the reader to \cite{I2}.

Let $f_{i}\in K[[x_{1},\ldots,x_{n}]]$, $i=1,\ldots,l$, with $l\geq2$, $U$,
$V^{(l-1)}$, and $\Phi$ be as in the introduction. We consider on $V^{(l-1)}$,
a closed submanifold of dimension $m$, an analytic differential form $\Theta$
of degree $m$, and denote the measure induced on $V^{(l-1)}$ as $\left\vert
\Theta\right\vert $. We refer the reader to \cite{Ser} and \cite{O} for
further details on $p$-adic manifolds and analytic subsets. Later on, we
specialize $\Theta$ to a Gel'fand-Leray form $\gamma_{GL}$ on $V^{(l-1)}$,
i.e. a form satisfying $\gamma_{GL}\wedge\wedge_{i=1}^{l-1}df_{i}=\wedge
_{i=1}^{n}dx_{i}$. The Gel'fand-Leray form is not unique, but its restriction
to $V^{(l-1)}$ is independent of the choice of $\gamma_{GL}$ (see \cite[Chap.
III, Sect. 1-9]{G-S}). \ By passing to a sufficiently fine covering of the
support of $\Phi$, $Z_{\Phi}(\omega,V^{(l-1)},f_{l})$ can be expressed as a
finite sum of classical Igusa's zeta functions, in this way one verifies that
$Z_{\Phi}(\omega,V^{(l-1)},f_{l})$ is holomorphic on $\omega\in\Omega
_{0}\left(  K^{\times}\right)  $. Since \ any $\omega\in\Omega\left(
K^{\times}\right)  $ can be expressed as $\omega\left(  z\right)  =\chi\left(
ac\text{ }z\right)  \left\vert z\right\vert _{K}^{s}$, we use the notation
$Z_{\Phi}(s,\chi):=Z_{\Phi}(s,\chi,V^{(l-1)},f_{l}):=Z_{\Phi}(\omega
,V^{(l-1)},f_{l})$.

\subsection{Resolution of singularities}

The following version of Hironaka's resolution theorem will be used later on:

\begin{theorem}
[Hironaka, \cite{H}]\label{thresolsing}There exists an embedded
resolution\textbf{\ } $\sigma:Y\rightarrow V^{(l-1)}$ of $f_{l}:V^{(l-1)}%
\rightarrow K$ and $\Theta$, that is,

\noindent(1) $Y$ is an $m-$dimensional $K-$analytic compact manifold, and
$\sigma$ is a proper $K-$analytic map which is an isomorphism outside of
$S:=f_{l}^{-1}\left(  0\right)  $;

\noindent(2) $\sigma^{-1}\left(  S\right)  =\cup_{i\in T}E_{i}$, where \ the
$E_{i}$\ are closed submanifolds of $Y$ of codimension one, each equipped with
a pair of positive integers $\left(  N_{i},v_{i}\right)  $ satisfying the
following: at every \ point $b$ of $Y$ there exist local coordinates $\left(
y_{1},\ldots,y_{m}\right)  $ on $Y$ around $b$ such that, if $E_{1}%
,\ldots,E_{k}$ are the $E_{i}$ containing $b$, we have \ on some neighborhood
of $b$ that $E_{i}$ is given by $y_{i}=0$ for $i=1,\ldots,k$,%
\[
f_{l}\circ\sigma=\text{ }\varepsilon\left(  y\right)
{\displaystyle\prod\limits_{i=1}^{k}}
y_{i}^{N_{i}},
\]
and
\[
\sigma^{\ast}\Theta=\eta\left(  y\right)  \left(
{\displaystyle\prod\limits_{i=1}^{k}}
y_{i}^{v_{i}-1}\right)  dy_{1}\wedge\ldots\wedge dy_{m},
\]
where \ $\varepsilon\left(  y\right)  $, $\eta\left(  y\right)  $ are units in
the local ring of $Y$\ at $b$.
\end{theorem}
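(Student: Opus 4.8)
The plan is to deduce the statement from Hironaka's resolution of singularities in characteristic zero, applied in the category of $K$-analytic manifolds. First I would note that, since $V^{(l-1)}$ is a closed analytic submanifold of the compact set $U$, it is itself compact, and it is smooth of pure dimension $m$ because it is cut out by $f_1,\dots,f_{l-1}$ with independent differentials; thus the problem is that of resolving the hypersurface $S=f_l^{-1}(0)$ inside the smooth $m$-dimensional manifold $V^{(l-1)}$, while simultaneously monomializing the top-degree analytic form $\Theta$. Covering $V^{(l-1)}$ by finitely many coordinate charts, in each of which $f_l$ is a convergent power series and $\Theta=g(y)\,dy_1\wedge\cdots\wedge dy_m$ with $g$ analytic and not identically zero, reduces everything to a local analytic problem.

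The second step is embedded resolution. Since $K$ has characteristic zero, Hironaka's theorem (in the analytic setting, or equivalently its canonical, functorial version) provides a proper morphism $\sigma:Y\to V^{(l-1)}$, a finite composition of blow-ups along closed smooth centers contained in the successive strict transforms of $S$, such that $\sigma$ is an isomorphism over $V^{(l-1)}\setminus S$, the divisor $\sigma^{-1}(S)$ has only simple normal crossings, $\sigma^{-1}(S)=\cup_{i\in T}E_i$, and locally $f_l\circ\sigma=\varepsilon(y)\prod_i y_i^{N_i}$ with $\varepsilon$ a unit and each $N_i\geq 1$. Because every center of blow-up is smooth, the pullback of a local volume form acquires a monomial factor supported on the exceptional divisors; hence, after possibly performing finitely many additional blow-ups along intersections of the $E_i$ so that the zero divisor of $\sigma^{*}\Theta$ together with $\sigma^{-1}(S)$ forms a single simple normal crossings divisor, we obtain $\sigma^{*}\Theta=\eta(y)\bigl(\prod_i y_i^{v_i-1}\bigr)dy_1\wedge\cdots\wedge dy_m$ with $\eta$ a unit and each $v_i\geq 1$. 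Attaching the pair $(N_i,v_i)$ to $E_i$ completes the construction at each point; patching the finitely many charts, and using that $Y$ is proper over the compact manifold $V^{(l-1)}$, shows that $Y$ is a compact $m$-dimensional $K$-analytic manifold.

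The main technical obstacle is the transfer of Hironaka's theorem, classically stated in the algebraic or complex-analytic setting, to the $p$-adic analytic category, together with the simultaneous normalization of $\Theta$. For the first point I would invoke the strong functoriality of canonical resolution, which makes the construction compatible with analytic localization and hence directly applicable to $K$-analytic manifolds; alternatively, one reduces the local analytic statement to a formal and then algebraic one via Artin approximation and transports the resulting resolution back. For the second point, the key observation is that a resolution obtained by blow-ups along smooth centers automatically yields normal-crossings behaviour for the pullback of any nowhere-vanishing analytic form, so only a bounded number of further blow-ups of coordinate subspaces is needed. I expect the bookkeeping of the pairs $(N_i,v_i)$ and the verification that both factorizations can be arranged with respect to the \emph{same} divisors $E_i$ to be the only delicate part of the argument.
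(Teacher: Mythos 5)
Your proposal is correct and follows essentially the same route as the paper, whose ``proof'' is just the citation of Hironaka's Corollary 3 combined with the Denef--van den Dries transfer argument (Theorem 2.2 and the reasoning on p.~97 of \cite{D-V}) for passing from the algebraic to the $K$-analytic category, i.e.\ exactly the localization/functoriality step you describe. The one caveat is that obtaining $\sigma^{*}\Theta=\eta\prod y_{i}^{v_{i}-1}\,dy$ with $\eta$ a unit implicitly requires $\Theta$ to be non-vanishing off $S$ (the theorem's own conclusion at points of $Y\smallsetminus\sigma^{-1}(S)$ already forces this), which holds in the paper's applications where $\Theta$ is the Gel'fand--Leray form.
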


The above theorem is a variation of Theorem 2.2 in \cite{D-V}, and its proof
follows from Corollary 3 in \cite{H} by using the reasoning given at the
bottom of p. 97 in \cite{D-V}.

We call the $\left(  N_{i},v_{i}\right)  $, $i\in T$, \textit{the numerical
data of} $\left(  \sigma,\Theta\right)  $. From now on, we fix $\Theta$ and
say that $\left(  N_{i},v_{i}\right)  ,i\in T$, are the numerical data of
$\sigma:Y\rightarrow V^{(l-1)}$. Set $\rho:=\rho(V^{(l-1)},f_{l},\Theta
):=\min_{i\in T}{v_{i}/N_{i}}$.

We denote the set of critical points of the map $f_{l}:V^{(l-1)}\rightarrow K$
by $C_{f_{l}}$, i.e.
\begin{align*}
C_{f_{l}}  &  =\left\{  x\in V^{(l-1)}\mid rank_{K}\left(  \frac{\partial
f_{i}}{\partial x_{j}}(x)\right)  _{\substack{1\leq i\leq l\\1\leq j\leq
n}}\leq l-1\right\} \\
&  =\left\{  x\in V^{(l-1)}\mid rank_{K}\left(  \frac{\partial f_{i}}{\partial
x_{j}}(x)\right)  _{\substack{1\leq i\leq l\\1\leq j\leq n}}=l-1\right\}  ,
\end{align*}
since $rank_{K}\left(  \frac{\partial f_{i}}{\partial x_{j}}(x)\right)
_{1\leq i\leq l-1,\text{ }1\leq j\leq n}=l-1$ for $x\in V^{(l-1)}$. For
$\lambda\in K^{\times}$, we set $V^{(l,\lambda)}:=\{x\in U\mid f_{1}%
(x)=\ldots=f_{l-1}(x)=0,\,f_{l}(x)=\lambda\}$.

\begin{remark}
Note that the following three statements are equivalent: (1) $C_{f_{l}}%
\subset{f_{l}}^{-1}(0)$; (2) for every $\lambda\in K^{\times}$, the $l$-form
$\bigwedge_{i=1}^{l}df_{i}$ does not vanish on $V^{(l,\lambda)} $, and (3) for
every $\lambda\in K^{\times}$, $V^{(l,\lambda)}$ is a closed submanifold of
dimension $n-l$.
\end{remark}

\section{\label{sect2}Poles of Local Zeta Functions Supported on $p$-adic
Submanifolds}

\begin{theorem}
\label{Th1}Let $\sigma:Y\rightarrow V^{(l-1)}$ be a fixed embedded resolution
of $f_{l}:V^{(l-1)}\rightarrow K$, with \textit{numerical data} $\left(
N_{i},v_{i}\right)  $, $i\in T$. Then

\noindent(1) $Z_{\Phi}(s,\chi,V^{(l-1)},f_{l})$ has a meromorphic continuation
as a rational function of $q^{-s}$. Its poles are among the values
\[
s=-\frac{v_{i}}{N_{i}}-\frac{2\pi\sqrt{-1}}{N_{i}\log q}k\text{, with }%
k\in\mathbb{Z}\text{,and }i\in T\text{,}%
\]
such that the order of $\chi$ divides $N_{i}$;

\noindent(2) if $C_{f_{l}}\subset f_{l}^{-1}(0)$, then there exists
$e(\Phi)>0$ in $\mathbb{N}$ such that
\[
Z_{\Phi}(s,\chi,V^{(l-1)},f_{l})=0\text{ \ \ for every \ }s\in\mathbb{C}%
\text{,}%
\]
unless the conductor $c(\chi)$ of $\chi$ satisfies $c(\chi)\leq e(\Phi)$;

\noindent(3) $-\rho$ is the real part of a pole of $Z_{\Phi}(s,\chi
_{triv},V^{(l-1)},f_{l})$, for some $\Phi$ with support in $U$, and
consequently, $\rho$ is independent of the embedded resolution chosen.
\end{theorem}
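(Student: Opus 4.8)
The plan is to prove Theorem \ref{Th1} by pulling everything back to $Y$ via the resolution $\sigma$ and reducing to a product of one-variable computations, exactly in the style of Igusa's treatment of local zeta functions.

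\medskip

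\textbf{Setup and pullback.} First I would cover the compact manifold $Y$ by finitely many charts on which the conclusions of Theorem \ref{thresolsing} hold, using a partition of unity subordinate to this cover so that it suffices to treat one chart at a time; write $\sigma^{*}\Phi = \Phi\circ\sigma$, which is again Bruhat--Schwartz. On such a chart, after pulling back the integral defining $Z_{\Phi}(s,\chi,V^{(l-1)},f_l)$ through the isomorphism $\sigma$ over $V^{(l-1)}\smallsetminus f_l^{-1}(0)$, the integrand becomes
\[
(\Phi\circ\sigma)(y)\,\chi\!\left(ac\,\Bigl(\varepsilon(y)\textstyle\prod_i y_i^{N_i}\Bigr)\right)\Bigl|\varepsilon(y)\Bigr|_K^{s}\,\Bigl|\prod_i y_i^{N_i}\Bigr|_K^{s}\,|\eta(y)|_K\,\Bigl|\prod_i y_i^{v_i-1}\Bigr|_K\,|dy|.
\]
Shrinking the chart and using that $\varepsilon,\eta$ are units, I can absorb $|\varepsilon|_K^{s}$ and $|\eta|_K$ (locally constant) and the factor $\chi(ac\,\varepsilon(y))$ into a new Bruhat--Schwartz function and reduce to computing, on a polydisc, the integral of $\prod_{i=1}^{k}|y_i|_K^{N_i s + v_i - 1}\chi(ac\,y_i)^{N_i}$ against a locally constant compactly supported function, together with a harmless integration in the remaining variables $y_{k+1},\dots,y_m$. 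This is a finite product of elementary one-variable integrals $\int_{P_K^{a}\smallsetminus\{0\}}|y|_K^{N s+v-1}\chi(ac\,y)^{N}\,|dy|$, whose standard evaluation gives a rational function of $q^{-s}$ times a geometric-series factor $\bigl(1-q^{-Ns-v}\bigr)^{-1}$ which is present precisely when $\chi^{N}$ is trivial on $R_K^{\times}$, i.e.\ when the order of $\chi$ divides $N$. Collecting contributions over all charts proves the meromorphic continuation and the pole list in (1).

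\medskip

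\textbf{Parts (2) and (3).} For (2), the hypothesis $C_{f_l}\subset f_l^{-1}(0)$ means that away from $f_l^{-1}(0)$ the map $f_l$ is a submersion on $V^{(l-1)}$, so there is a uniform bound $e(\Phi)$ (depending on the compact support of $\Phi$) such that on the locus contributing to $Z_\Phi$ the angular component $ac\,f_l(x)$, as a function on a small enough ball, sweeps out a coset of a subgroup of $R_K^{\times}$ of conductor at most $e(\Phi)$; if $c(\chi) > e(\Phi)$ then $\chi$ restricted to each such coset is a nontrivial character and the corresponding local integral vanishes by orthogonality of characters. Concretely one sees this chart by chart: where $f_l$ is a local coordinate (possible by the submersion property), the relevant integral contains a factor $\int_{R_K^{\times}}\chi(u)\,|du|=0$ unless $c(\chi)$ is small. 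For (3), with $\chi=\chi_{triv}$ and a suitable nonnegative $\Phi$ supported near a point where the minimum $v_i/N_i=\rho$ is attained on an $E_i$ meeting $f_l^{-1}(0)$, the pole at $s=-\rho$ does not cancel: the residue is, up to a positive constant, an integral of a nonnegative non-identically-zero Bruhat--Schwartz function over a component of $E_i$, hence strictly positive. Since $-\rho=\operatorname{Re}(s)$ for an actual pole of $Z_\Phi$ and $Z_\Phi$ itself does not depend on the resolution, $\rho$ is an intrinsic invariant.

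\medskip

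\textbf{Main obstacle.} The routine pullback and one-variable integrations are standard; the genuinely delicate point is part (3), proving \emph{non-cancellation} of the candidate pole at $s=-\rho$. One must exhibit a single $\Phi$ for which no cancellation occurs among the (possibly several) components $E_i$ realizing the minimum $\rho$, and among different charts. I expect to handle this exactly as in Igusa's argument for the usual zeta function (and as in \cite{Z3}): localize $\Phi$ to a small neighborhood of a \emph{generic} point of a single component $E_{i_0}$ with $v_{i_0}/N_{i_0}=\rho$ — a point lying on $E_{i_0}$, on no other $E_j$, and in the image of $\sigma$ — so that only the factor $(1-q^{-N_{i_0}s-v_{i_0}})^{-1}$ produces the pole at $s=-\rho$, with a manifestly positive residue coming from $\int (\Phi\circ\sigma)\,|\eta|\,|dy'|>0$ over the transverse slice. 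The existence of such a generic point uses that $\sigma$ is an isomorphism outside $f_l^{-1}(0)$ and that $E_{i_0}$ is not entirely contained in the union of the other $E_j$'s together with the exceptional locus of lower-dimensional strata; this is where a little care with the geometry of the resolution is needed, but no hard estimate.
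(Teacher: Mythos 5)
Your part (1) is the standard pullback-to-the-resolution computation, which is exactly what the paper invokes (it cites Igusa's Theorem 8.2.1 for the model case $V^{(l-1)}=K^{n-l+1}\cap U$), and it is fine. The genuine gap is in part (2). The hypothesis $C_{f_l}\subset f_l^{-1}(0)$ gives submersivity of $f_l$ \emph{away from} the zero locus, but the place where your argument needs $f_l$ to be a local coordinate is \emph{at} the zero locus, and precisely there the hypothesis permits critical points: $C_{f_l}\cap f_l^{-1}(0)$ is in general nonempty (if it were empty, no resolution would be needed in part (1)). Near a point of $C_{f_l}\cap f_l^{-1}(0)$ neither of your two mechanisms applies. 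The coset argument fails because your locus $\operatorname{supp}\Phi\cap V^{(l-1)}\smallsetminus f_l^{-1}(0)$ is not compact and the coset exponent $e_a$ needed on a ball around $a$ blows up as $a$ approaches $f_l^{-1}(0)$, so no single finite covering yields a uniform $e(\Phi)$; and the orthogonality argument fails because at a critical zero the pushforward of the measure under $f_l$ is not locally a Haar measure, so the factor $\int_{R_K^{\times}}\chi(u)\,|du|$ never materializes (this is exactly the regime where Gauss-sum-type contributions can survive, and it is why the hypothesis is really there to exclude critical points with \emph{nonzero} critical value, not to give you submersivity at the zeros). The standard repair --- and what the cited Theorem 4.4 of \cite{Z3} does --- is to run the character argument upstairs on the compact manifold $Y$: on a chart meeting $\sigma^{-1}(f_l^{-1}(0))$ the integral factors through $\int_{P_K^{d}}\chi^{N_i}(ac\,y_i)\,|y_i|_K^{N_i s+v_i-1}\,|dy_i|$, which vanishes unless the order of $\chi$ divides $N_i$, and a character of $R_K^{\times}$ of order dividing $N_i$ has conductor bounded in terms of $N_i$ and $K$; charts disjoint from $\sigma^{-1}(f_l^{-1}(0))$ are then handled by your coset argument with constants that are uniform by compactness of $Y$. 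Note also that $\int_{R_K^{\times}}\chi(u)\,|du|=0$ for \emph{every} nontrivial $\chi$, not ``unless $c(\chi)$ is small,'' so as written that step cannot be the source of the conductor bound.

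For part (3) your plan is essentially the one the paper points to (Theorem 2.7 of \cite{VZ}), but be aware that since $\sigma$ may contract the component $E_{i_0}$ realizing $\rho$, shrinking the support of $\Phi$ downstairs does not localize $\Phi\circ\sigma$ near a single generic point of $E_{i_0}$: the preimage of a small ball around $\sigma(b)$ can still meet other components $E_j$ with $v_j/N_j=\rho$, so you cannot arrange that ``only the factor $(1-q^{-N_{i_0}s-v_{i_0}})^{-1}$ produces the pole.'' The clean fix is the positivity you already have implicitly: for $\chi=\chi_{\mathrm{triv}}$, $\Phi\geq 0$ and $s$ real, every chart contributes a nonnegative amount to the leading coefficient at $s=-\rho$ and at least one contributes positively, so no cancellation is possible (equivalently, $Z_\Phi(s,\chi_{\mathrm{triv}})\rightarrow+\infty$ as $s\rightarrow-\rho^{+}$ along the reals). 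With that adjustment part (3) is sound.
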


\begin{proof}
\noindent(1) The proof uses the same argument of the case $V^{(l-1)}\left(
K\right)  =K^{n-l+1}\cap U$, see \cite[Theorem 8.2.1]{I2}.\ (2) The proof is a
variation of the one given for Theorem 4.4 in \cite{Z3}.

\noindent(3) The proof is analogous to the one given for Theorem 2.7 in
\cite{VZ}.
\end{proof}

\begin{remark}
\label{note}We set $m_{\rho}:=m_{\rho}(V^{(l-1)},f_{l},\Theta)$ for the
largest multiplicity of the poles of $(1-q^{-1-s})Z_{\Phi}(s,\chi
_{\mathrm{triv}},V^{(l-1)},f_{l},\Theta)$ and $Z_{\Phi}(s,\chi,V^{(l-1)}%
,f_{l},\Theta)$ with $\chi\not =\chi_{triv}$ having real part $-\rho$, when
$\Phi$ runs through all the Bruhat-Schwartz functions. Note that by the
previous theorem $\rho$ and $m_{\rho}$ are well-defined.
\end{remark}

\section{The oscillatory integrals $E_{\Phi}(z)$}

\subsection{Additive characters}

Given $z=\sum_{n=n_{0}}^{\infty}z_{n}p^{n}\in\mathbb{Q}_{p}$, with $z_{n}%
\in\left\{  0,\ldots,p-1\right\}  $ and $z_{n_{0}}\neq0$, we set
\[
\left\{  z\right\}  _{p}:=\left\{
\begin{array}
[c]{lll}%
0 & \text{if} & n_{0}\geq0\\
&  & \\
\sum_{n=n_{0}}^{-1}z_{n}p^{n} & \text{if} & n_{0}<0,
\end{array}
\right.
\]
\textit{the fractional part of }$z$. Then $\exp(2\pi\sqrt{-1}\left\{
z\right\}  _{p}),$ $z\in\mathbb{Q}_{p}$, is an additive character on
$\mathbb{Q}_{p}$ trivial on $\mathbb{Z}_{p}$ but not on $p^{-1}\mathbb{Z}_{p}$.

We recall that there exists an integer $d\geq0$ such that $Tr_{K/\mathbb{Q}%
_{p}}(z)\in\mathbb{Z}_{p}$ for $\left\vert z\right\vert _{K}\leq q^{d}$ but
$Tr_{K/\mathbb{Q}_{p}}(z_{0})\notin\mathbb{Z}_{p}$ for some $z_{0}$\ with
$\left\vert z_{0}\right\vert _{K}=q^{d+1}$. The integer $d$ is called
\textit{the exponent of the different} of $K/\mathbb{Q}_{p}$. It is known that
$d\geq e-1$, where $e$ is the ramification index of $K/\mathbb{Q}_{p}$, see
e.g. \cite[Chap. VIII, Corollary of Proposition 1]{W}. The additive character%
\[
\varkappa(z)=\exp(2\pi\sqrt{-1}\left\{  Tr_{K/\mathbb{Q}_{p}}(\pi
^{-d}z)\right\}  _{p}),\text{ }z\in K\text{, }%
\]
is \textit{a standard character} of $K$, i.e. $\varkappa$ is trivial on
$R_{K}$ but not on $P_{K}^{-1}$. For our purposes, it is more convenient to
use
\[
\Psi(z)=\exp(2\pi\sqrt{-1}\left\{  Tr_{K/\mathbb{Q}_{p}}(z)\right\}
_{p}),\text{ }z\in K\text{, }%
\]
instead of $\varkappa(\cdot)$. This particular choice is due to the fact that
we use Denef's approach for estimating oscillatory integrals, see
\cite[Proposition 1.4.4]{D0}.

\subsection{\label{sect4}Asymptotic expansion of oscillatory integrals}

We set
\[
E_{\Phi}(z):=E_{\Phi}(z,V^{(l-1)},f_{l},\Theta)=\int\limits_{V^{(l-1)}\left(
K\right)  }\Phi(x)\Psi(zf_{l}(x))|\Theta|,
\]
for $z\in K$. The following theorem describes the asymptotic behavior of
oscillatory integrals $E_{\Phi}(z)$.

\begin{theorem}
\label{Th2}Let $\sigma:Y\rightarrow V^{(l-1)}$ be any embedded resolution of
$f_{l}:V^{(l-1)}\rightarrow K$, with \textit{numerical data} $\left(
N_{i},v_{i}\right)  $, $i\in T$. Assume that $C_{f_{l}} \subset f_{l}^{-1}%
(0)$. Then

\noindent(1) for $\left\vert z\right\vert _{K}$ big enough $E_{\Phi}(z)$ is a
finite $\mathbb{C}$-linear combination \ of functions of the form $\chi\left(
ac\text{ }z\right)  \left\vert z\right\vert _{K}^{\lambda}\left(  \log
_{q}\left\vert z\right\vert _{K}\right)  ^{\beta}$ with coefficients
independent of $z$, and with $\lambda\in\mathbb{C}$ a pole of $\left(
1-q^{-1-s}\right)  Z_{\Phi}(s,\chi_{\mathrm{triv}},V^{(l-1)},f_{l})$ or of
$Z_{\Phi}(s,\chi,V^{(l-1)},f_{l})$, and \ $\beta\in\mathbb{N}$, with
$\beta\leq\left(  \text{multiplicity of }\lambda\right)  $ $-1$. In addition,
all poles $\lambda$ appear effectively in this linear combination.

\noindent(2) There exists a constant $C$ such that for $|z|_{K}>1$,
\[
\left\vert E_{\Phi}(z)\right\vert \leq C\left\vert z\right\vert _{K}^{-\rho
}\left(  \log_{q}\left\vert z\right\vert _{K}\right)  ^{m_{\rho}-1}.
\]

\end{theorem}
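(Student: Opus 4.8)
The plan is to prove both parts of Theorem~\ref{Th2} by pulling the oscillatory integral back to the resolution space $Y$ and reducing to a local computation in coordinates where $f_l\circ\sigma$ is a monomial times a unit. First I would use a partition of unity subordinate to a finite cover of the compact manifold $Y$ by coordinate charts of the type provided by Theorem~\ref{thresolsing}, so that $\Phi\circ\sigma$ decomposes as a finite sum of Bruhat--Schwartz functions each supported in a single chart. On each chart we may further shrink so that $\varepsilon(y)$ and $\eta(y)$ are units with constant angular component and constant absolute value; after absorbing $\eta$ into the measure and $\varepsilon$ into the character argument (using that $\Psi(zf_l\circ\sigma)=\Psi(z\varepsilon(y)\prod y_i^{N_i})$ and that $\varepsilon$ is, on a small enough ball, of the form $u^{N}$ times a constant for characters of bounded conductor, or more simply by the standard change of variables on $R_K^\times$), the integral on each chart becomes a product of one-variable integrals of the shape $\int_{R_K}\Phi_j(y_j)\Psi(z c\, y_j^{N})|y_j|^{v-1}|dy_j|$ and plain $\int \Phi_j |dy_j|$ factors for the coordinates not meeting any $E_i$. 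The hypothesis $C_{f_l}\subset f_l^{-1}(0)$ is what guarantees, via the Remark after Theorem~\ref{thresolsing} together with part~(2) of Theorem~\ref{Th1}, that only characters $\chi$ of conductor bounded by some $e(\Phi)$ contribute, so the sum over $\chi$ is finite and we never meet poles off the lines $\operatorname{Re}(s)=-v_i/N_i$.

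Next I would invoke the known asymptotic expansion for the basic one-variable (and multi-variable monomial) oscillatory integral: the function $z\mapsto \int_{R_K^n}\Phi(y)\,\Psi(z\,\mathrm{ac}(y_1^{N_1}\cdots)\,)\,\prod|y_i|^{v_i-1}|dy|$ admits, for $|z|_K$ large, a finite expansion into terms $\chi(\mathrm{ac}\,z)|z|_K^{\lambda}(\log_q|z|_K)^{\beta}$ with $\lambda$ ranging over the poles $-v_i/N_i-2\pi\sqrt{-1}k/(N_i\log q)$ and $\beta$ bounded by the multiplicity minus one — this is exactly Denef's computation, \cite[Section 1.4]{D0} or \cite[Chap.~8]{I2}, which is why the choice of $\Psi$ rather than $\varkappa$ was made. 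Summing these local expansions over the finitely many charts and finitely many relevant $\chi$ gives part~(1), once one checks the precise matching between the poles appearing here and the poles of $(1-q^{-1-s})Z_\Phi(s,\chi_{\mathrm{triv}})$ and of $Z_\Phi(s,\chi)$: this matching is the content of Theorem~\ref{Th1}(1), with the factor $(1-q^{-1-s})$ accounting for the extra pole at $s=-1$ coming from the trivial-character Igusa integral over $R_K$ that does not show up in the oscillatory asymptotics. For the ``all poles appear effectively'' clause I would argue as in Theorem~\ref{Th1}(3)/\cite{VZ}: a pole that did not appear effectively for any $\Phi$ would, by Mellin inversion relating $E_\Phi(z)$ and $Z_\Phi(s,\chi)$, force a cancellation contradicting the fact that $-\rho$ (and each candidate pole of maximal behavior) is genuinely attained.

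Part~(2) is then a routine consequence of part~(1): among all the terms $\chi(\mathrm{ac}\,z)|z|_K^{\lambda}(\log_q|z|_K)^{\beta}$ the dominant ones as $|z|_K\to\infty$ are those with $\operatorname{Re}(\lambda)$ maximal, i.e.\ $\operatorname{Re}(\lambda)=-\rho$ by definition of $\rho=\min_{i}v_i/N_i$, and among those the largest $\beta$ is $m_\rho-1$ by definition of $m_\rho$ in Remark~\ref{note}; bounding $|\chi(\mathrm{ac}\,z)|\le 1$ and absorbing the finitely many coefficients and the lower-order terms into a single constant $C$ valid for $|z|_K>1$ yields the stated estimate. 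The main obstacle, and the step deserving the most care, is the reduction on each chart: one must handle the unit $\varepsilon(y)$ correctly inside the additive character (it cannot simply be discarded as in the local zeta function case, since $\Psi$ is not multiplicative), and one must treat simultaneously the several coordinates $y_1,\dots,y_k$ that meet the exceptional divisor, producing a genuine multi-variable monomial phase whose asymptotics require the iterated version of Denef's lemma; verifying that the resulting poles and their multiplicities are exactly those predicted by Theorem~\ref{Th1}, uniformly over the bounded family of contributing characters $\chi$, is where the real work lies.
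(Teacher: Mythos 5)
Your route is genuinely different from the paper's. The paper never returns to the resolution space in this proof: it applies Denef's identity (Proposition 1.4.4 of \cite{D0}, recorded as formula (\ref{Form1})), which writes $E_{\Phi}(u\pi^{-m})$ as $Z_{\Phi}(0,\chi_{\mathrm{triv}})$ plus the coefficient of $t^{m-1}$ in $(t-q)Z_{\Phi}(s,\chi_{\mathrm{triv}})/((q-1)(1-t))$ plus a Gauss-sum-weighted sum over nontrivial $\chi$ of coefficients of $Z_{\Phi}(s,\chi)$. The hypothesis $C_{f_{l}}\subset f_{l}^{-1}(0)$ enters only through Theorem \ref{Th1}(2), which makes the sum over $\chi$ finite; part (1) then falls out of the partial-fraction decomposition of the rational functions $Z_{\Phi}(s,\chi)$, and part (2) follows by comparing exponents with $\rho$ and $m_{\rho}$ and adjusting the constant using the boundedness of $E_{\Phi}$ on the remaining range of $z$. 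All the geometric work is thus encapsulated in Theorem \ref{Th1}, and the passage from $Z_{\Phi}$ to $E_{\Phi}$ is purely formal. You correctly identify the role of the hypothesis and of the choice of $\Psi$, and your chart-by-chart stationary-phase computation is the classical alternative; but it is substantially harder to execute (the unit $\varepsilon(y)$ in the phase must be absorbed by extracting roots via Hensel's lemma on sufficiently small balls, which you flag but do not carry out).

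More importantly, as written your argument has a gap exactly where you appeal to ``the precise matching \ldots\ which is the content of Theorem \ref{Th1}(1).'' Theorem \ref{Th1}(1) only says that the poles of $Z_{\Phi}(s,\chi)$ are \emph{among} the candidate values $-v_{i}/N_{i}-2\pi\sqrt{-1}k/(N_{i}\log q)$; it does not say that every exponent produced by your local monomial computations is an \emph{actual} pole of $Z_{\Phi}(s,\chi)$. Chart by chart you only control the candidate data $(N_{i},v_{i})$, and contributions attached to candidate poles that cancel in the global zeta function could a priori survive in your sum of local expansions --- at least you give no argument that they cancel. This threatens both the ``only actual poles occur, and all of them occur effectively'' content of part (1) and the exponent $m_{\rho}-1$ in part (2), since $m_{\rho}$ is defined (Remark \ref{note}) via multiplicities of actual poles, not via the numerical data. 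The clean way to obtain the matching is precisely the global identity (\ref{Form1}): it expresses $E_{\Phi}$ directly in terms of the rational functions $Z_{\Phi}(s,\chi)$, so the exponents and log-powers in the expansion are by construction the poles and multiplicities of those functions. If you wish to keep your approach, you must either prove the cancellation across charts or reassemble the local pieces into the global $Z_{\Phi}$ --- at which point you will have rederived Denef's formula.
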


\begin{proof}
(1) Let Coeff$_{t^{k}}Z_{\Phi}(s,\chi)$ denote the coefficient of $t^{k}$ in
the power expansion of $Z_{\Phi}(s,\chi)$ in the variable $t=q^{-s}$. The
following formula is a variation of Proposition 1.4.4 given by Denef in
\cite{D0}, see also \cite[Proposition 4.6]{Z2}: \ for $u\in R^{\times}$ and
$m\in\mathbb{Z}$,
\begin{align}
E_{\Phi}(u\pi^{-m})  &  =Z_{\Phi}(0,\chi_{\mathrm{triv}})+\text{Coeff}%
_{t^{m-1}}\frac{(t-q)Z_{\Phi}(s,\chi_{\mathrm{triv}})}{(q-1)(1-t)}%
\label{Form1}\\
&  +\sum\limits_{\chi\neq\chi_{\mathrm{triv}}}g_{\chi^{-1}}\chi(u)\text{Coeff}%
_{t^{m-c(\chi)}}Z_{\Phi}(s,\chi),\nonumber
\end{align}
where $c(\chi)$ is the conductor of $\chi$, and $g_{\chi}$ denotes the
Gaussian sum
\[
g_{\chi}=(q-1)^{-1}q^{1-c(\chi)}\sum\limits_{v\in(R/P^{c(\chi)})^{\times}}%
\chi(v)\Psi(v/\pi^{c(\chi)}).
\]
By using the hypothesis $C_{f_{l}}\subset f_{l}^{-1}(0)$, $Z_{\Phi}(s,\chi)$
is a rational function identically zero for almost all $\chi$ (cf. Theorem
\ref{Th1} ), hence the series in the right side of (\ref{Form1}) is a finite
sum. The asymptotic expansion for $E_{\Phi}(z)$ is obtained by expanding the
right side of (\ref{Form1}) in partial fractions.

(2) The estimation for $|z|_{K}$ big enough is obtained as follows: there
exist $\rho$, $m_{\rho}$ (cf. Remark \ref{note}) such that for every pole
$\lambda$ in Theorem \ref{Th2} (1),%
\[
\left\vert z\right\vert _{K}^{\lambda}\left(  \log_{q}\left\vert z\right\vert
_{K}\right)  ^{\beta}\leq C\left\vert z\right\vert _{K}^{-\rho}\left(
\log_{q}\left\vert z\right\vert _{K}\right)  ^{m_{\rho}-1},
\]
for $|z|_{K}$ big enough, \ and some constant $C$. The estimation of
$|E_{\Phi}(z)|$, for $|z|_{K}>1$, follows from the previous estimation by
adjusting the constant $C$, since $|E_{\Phi}(z)|$ is upper bounded.
\end{proof}

The estimation given in the second part of Theorem \ref{Th2} is optimal, in
the sense that there exists a $\Phi$ such that the constants $\rho$, $m_{\rho
}$ cannot be improved.

\section{Exponential Sums Along $p$-adic Submanifolds of $R_{K}^{n}$}

\subsection{Some additional notation}

From now on, we assume that all the $f_{i}\left(  x\right)  $ have
coefficients in $R_{K}$, and put $U=R_{K}^{n}$, and set
\[
V^{(j)}\left(  R_{K}\right)  :=\left\{  x\in R_{K}^{n}\mid f_{i}\left(
x\right)  =0,\text{ }i=1,\ldots,j\right\}
\]
for $j=l-1$, $l$. Note that $V^{(l-1)}\left(  R_{K}\right)  $ is a closed
submanifold of dimension $n-l+1\geq1$.

Let $\operatorname{mod}P_{K}^{m}$ denote the canonical homomorphism $R_{K}%
^{n}\rightarrow\left(  R_{K}/P_{K}^{m}\right)  ^{n}$, for $m\in
\mathbb{N\smallsetminus}\left\{  0\right\}  $. The image of $x\in R_{K}^{n}$
under this homomorphism is denoted by $\overline{x}$. We will call the image
of $A\subseteq R_{K}^{n}$ by $\operatorname{mod}P_{K}^{m}$, \textit{the
reduction }$\operatorname{mod}P_{K}^{m}$\textit{\ of }$A$, and it will be
denoted as $A$ $\operatorname{mod}P_{K}^{m}$. When we write $\overline{x}\in
A\operatorname{mod}P_{K}^{m}$, we always assume without mentioning that $x\in
A$. We will apply these definitions for $A$ equal to $V^{(l-1)}(R_{K})$.

For any polynomial $g$ over $R_{K}$ we denote by $\overline{g}$ \ the
polynomial over $\overline{K}$ obtained by reducing each coefficient of $g$
modulo $P_{K}$.

We define for $m\in\mathbb{N\smallsetminus}\left\{  0\right\}  $ the set
\[
V^{(l-1)}\left(  R_{K}/P_{K}^{m}\right)  =\left\{  \overline{x}\in\left(
R_{K}/P_{K}^{m}\right)  ^{n}\mid ord\left(  f_{i}\left(  x\right)  \right)
\geq m,\text{ }i=1,\ldots,l-1\right\}  .
\]
We note that \textquotedblleft$ord\left(  f_{i}\left(  x\right)  \right)  \geq
m$\textquotedblright\ is independent of the representative chosen to compute
$ord\left(  f_{i}\left(  x\right)  \right)  $. For $m=1$, we write often
$V^{(l-1)}(\overline{K})$ instead of $V^{(l-1)}(R_{K}/P_{K})$. Note that
\[
V^{(l-1)}\left(  \overline{K}\right)  =\left\{  x\in\overline{K}^{n}%
\mid\overline{f_{i}}\left(  x\right)  =0,\text{ }i=1,\ldots,l-1\right\}  .
\]

We will say that $V^{(l-1)}\left(  R_{K}\right)  $ \textit{has good reduction}
$\operatorname{mod}$ $P_{K}$ if
\[
rank_{\overline{K}}\left[  \overline{\frac{\partial f_{i}}{\partial x_{j}}%
}\left(  x\right)  \right]  _{\substack{1\leq i\leq l-1 \\1\leq j\leq
n}}=l-1\text{, for every }x\in V^{(l-1)}\left(  \overline{K}\right)  .
\]
If $V^{(l-1)}\left(  R_{K}\right)  $ has good reduction $\operatorname{mod}$
$P_{K}$, the Hensel lemma implies that
\[
V^{(l-1)}\left(  R_{K}\right)  \operatorname{mod}P_{K}^{m}=V^{(l-1)}\left(
R_{K}/P_{K}^{m}\right)
\]
for every $m\in\mathbb{N}\setminus\{0\}$.

\subsection{Submanifolds with bad reduction $\operatorname{mod}P_{K}$}

In general the \ reduction $V^{(l-1)}\left(  R_{K}\right)  \operatorname{mod}%
P_{K}$ has singular points. In order to deal with these points we use some
ideas about N\'{e}ron $\pi$-desingularization, see e.g. \cite[Section 4]{A},
or \cite[Proposition 2.4 and Lemma 2.5]{Z2}. Since we did not find a suitable
reference for our purposes, we prove below the required results.

\begin{lemma}
Suppose that $V^{\left(  l-1\right)  }\left(  R_{K}\right)  $ is a closed
submanifold of dimension $n-l+1$. Let $x_{0}\in V^{\left(  l-1\right)
}\left(  R_{K}\right)  $. Then, there exist a positive integer $L$ and
polynomials $f_{1,x_{0}},\ldots,f_{l-1,x_{0}}\in R_{K}\left[  x_{1}%
,\ldots,x_{n}\right]  $ such that%
\[
V^{(l-1)}\left(  R_{K}\right)  =\left\{  x\in R_{K}^{n}\mid f_{i,x_{0}}\left(
x\right)  =0,\text{ }i=1,\ldots,l-1\right\}  ,
\]
and if we define
\[
f_{i,x_{0}}\left(  x_{0}+\pi^{L}y\right)  =\pi^{e_{i,x_{0},L}}f_{i,x_{0}%
,L}\left(  y\right)  \text{, }%
\]
with $f_{i,x_{0},L}\left(  y\right)  \in R_{K}\left[  y_{1},\ldots
,y_{n}\right]  \setminus P_{K}\left[  y_{1},\ldots,y_{n}\right]  $, for
$i=1,\ldots,l-1$, then, the submanifold
\[
V_{x_{0},L}^{(l-1)}\left(  R_{K}\right)  :=\left\{  y\in R_{K}^{n}\mid
f_{i,x_{0},L}\left(  y\right)  =0,\text{ }i=1,\ldots,l-1\right\}
\]
has good reduction $\operatorname{mod}P_{K}$.
\end{lemma}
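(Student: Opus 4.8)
The plan is to exploit the fact that $V^{(l-1)}(R_K)$ is a closed submanifold of dimension $n-l+1$, so at the point $x_0$ the Jacobian matrix $\left(\frac{\partial f_i}{\partial x_j}(x_0)\right)_{1\le i\le l-1,\,1\le j\le n}$ has rank $l-1$ \emph{over $K$}; the trouble is only that its reduction mod $P_K$ may drop rank. First I would choose, after relabeling coordinates, an $(l-1)\times(l-1)$ submatrix of the Jacobian at $x_0$ that is invertible over $K$, say with $\operatorname{ord}\det = \mu \ge 0$. This $\mu$ measures exactly how far $V^{(l-1)}(R_K)$ is from having good reduction at $x_0$. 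The idea is that after translating by $x_0$ and rescaling the variables by $\pi^L$ for $L$ large enough (roughly $L > \mu$, possibly with an extra margin coming from higher-order terms of the $f_i$ and from the coordinates' valuations), the selected $(l-1)\times(l-1)$ minor of the Jacobian of the rescaled system becomes a unit, which is precisely good reduction.

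Concretely, the key steps are: (i) replace the original generators $f_1,\dots,f_{l-1}$ by a new set $f_{1,x_0},\dots,f_{l-1,x_0}$ that still cuts out $V^{(l-1)}(R_K)$ but is ``well adapted'' at $x_0$ --- one takes suitable $R_K$-linear combinations of the $f_i$ (multiplying by the adjugate of the chosen invertible submatrix, cleared of denominators) so that the new system has a particularly clean linear part at $x_0$; this is the Néron $\pi$-desingularization step, analogous to \cite[Proposition 2.4 and Lemma 2.5]{Z2} or \cite[Section 4]{A}. (ii) Expand each $f_{i,x_0}(x_0+\pi^L y)$ in powers of $\pi^L$: the constant term vanishes since $x_0\in V^{(l-1)}(R_K)$, the linear term is $\pi^L \sum_j \frac{\partial f_{i,x_0}}{\partial x_j}(x_0)\,y_j$, and the higher terms carry powers $\pi^{2L},\pi^{3L},\dots$; factor out the largest common power $\pi^{e_{i,x_0,L}}$ of $\pi$ so that $f_{i,x_0,L}(y)\in R_K[y]\setminus P_K[y]$. (iii) Check that for $L$ chosen large enough, the reduction mod $P_K$ of $\left(\frac{\partial f_{i,x_0,L}}{\partial y_j}(y)\right)$ has rank $l-1$ at every point of $V_{x_0,L}^{(l-1)}(\overline K)$: because all higher-order contributions are killed by the factor $\pi^{2L-e_{i,x_0,L}}$ (which lies in $P_K$ once $L$ is big relative to $\mu$), the reduced Jacobian is governed entirely by the linear part, whose relevant minor was arranged to be a unit in step (i). Finally, (iv) note that $y\mapsto x_0+\pi^L y$ is an $R_K$-analytic isomorphism $R_K^n \to x_0 + \pi^L R_K^n \subseteq R_K^n$ (bijective only onto that polydisc, but that is harmless for the local structure), so $V_{x_0,L}^{(l-1)}(R_K)$ is genuinely a closed submanifold of $R_K^n$ and the definition of $f_{i,x_0,L}$ makes sense, with all quantities in $R_K[y_1,\dots,y_n]$.

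The main obstacle I expect is step (i): producing generators $f_{i,x_0}$ that simultaneously (a) cut out exactly the same zero set $V^{(l-1)}(R_K)$ --- not merely the same reduced scheme or the same set over $\overline K$ --- and (b) have a linear part at $x_0$ whose appropriate $(l-1)\times(l-1)$ block is, after the rescaling, a unit. Passing to $R_K$-linear combinations via the adjugate matrix can change the zero set (one only gets an inclusion in one direction in general), so one must argue that because the original Jacobian already has full rank $l-1$ over $K$, the chosen combinations are invertible over $K$ and hence define the same analytic subset; keeping careful track of how many powers of $\pi$ are introduced by clearing denominators in the adjugate, and how this interacts with the choice of $L$, is the delicate bookkeeping. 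The estimate one ultimately needs is something like $L$ strictly greater than the $\pi$-adic valuation of the relevant determinant (plus a controlled constant depending only on $x_0$ and the $f_i$), and verifying that this single $L$ works uniformly over the whole good-reduction locus is where the argument has to be done with care rather than by a routine computation. Once $L$ is fixed appropriately, the remaining verifications in steps (ii)--(iv) are direct.
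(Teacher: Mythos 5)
Your overall strategy coincides with the paper's: translate to $x_{0}$, replace $f_{1},\ldots,f_{l-1}$ by $R_{K}$-linear combinations whose coefficient matrix is invertible over $K$ (so the zero set in $R_{K}^{n}$ is unchanged, the $f_{i}$ being $K$-valued), and take $L$ large enough that after substituting $x=x_{0}+\pi^{L}y$ and dividing out the content, all higher-degree terms land in $P_{K}[y]$ and the reduced Jacobian becomes the reduction of the normalized linear part. The paper implements the linear algebra by row-reducing the matrix of linear coefficients to echelon form over $R_{K}$, always pivoting on an entry of minimal valuation; you multiply by the adjugate of a chosen $(l-1)\times(l-1)$ minor.

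The one place your sketch has a real gap is the choice of that minor: it must be one whose determinant has \emph{minimal} valuation $\mu$ among all $(l-1)\times(l-1)$ minors of the full linear coefficient matrix $M=(A\mid B)$, not merely one that is invertible over $K$. With the minimal choice, Cramer's rule identifies each entry of $\operatorname{adj}(A)B$, up to sign, with an $(l-1)\times(l-1)$ minor of $M$, hence of valuation at least $\mu$, while the block $\det(A)\,I$ has valuation exactly $\mu$ on its diagonal; then $L=\mu+1$ works, each row has content $\pi^{L+\mu}$, and the reduced Jacobian contains a unit multiple of the identity. Without minimality the construction can fail: take $n=3$, $l=3$, $f_{1}=x_{1}+x_{2}+x_{3}$, $f_{2}=x_{1}+(1+\pi)x_{2}$, and the (invertible, $\mu=1$) block on columns $1,2$. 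The adjugate produces linear parts $(\pi,0,1+\pi)$ and $(0,\pi,-1)$; since each row contains a unit coefficient, the content after rescaling is $\pi^{L}$, and the reduced system is $y_{3}=0$, $-y_{3}=0$, whose Jacobian has rank $1<2$. (The minor on columns $1,3$ is a unit here, so the right choice gives good reduction at once.) Finally, the uniformity of $L$ in $x_{0}$ that you single out as the delicate point is not needed for this lemma, where $x_{0}$ is fixed; it is obtained afterwards, by compactness of $V^{(l-1)}(R_{K})$, in Proposition \ref{Neron_resol}.
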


\begin{proof}
By applying a translation, we can assume that $x_{0}$ is the origin. We set%
\[
f_{i}\left(  x\right)  =a_{i,1}x_{1}+\ldots+a_{i,n}x_{n}+\text{ higher degree
terms,}%
\]
for $i=1,\ldots,l-1$. The matrix $\left(  a_{i,j}\right)  $ has rank $l-1$
over $K$ because $V^{\left(  l-1\right)  }\left(  R_{K}\right)  $ is a closed
submanifold of dimension $n-l+1$. The announced polynomials $f_{i,x_{0}}$ are
linear combinations with coefficients in $R_{K}$ of the $f_{i}$. These linear
combinations are determined by the elementary row operations over $R_{K}$
required to reduced the matrix $\left(  a_{i,j}\right)  $ to its row echelon
form. Since $R_{K}$ is not a field some details are required. We select an
entry $a_{i_{0},j_{0}}$ of $\left(  a_{i,j}\right)  $ satisfying $ord\left(
a_{i_{0},j_{0}}\right)  =\min_{i,j}ord\left(  a_{i,j}\right)  $. Then by row
and column interchanging one gets a new matrix having $a_{i_{0},j_{0}} $ in
the position $\left(  1,1\right)  $. Thus we can assume that $i_{0}=j_{0}=1$.
In addition, by row interchanging we can assume that%
\[
ord\left(  a_{2,1}\right)  \leq ord\left(  a_{3,1}\right)  \leq\ldots\leq
ord\left(  a_{l-1,1}\right)  .
\]
We now can perform elementary row operations on $R_{K}$ to obtain a matrix
$\left(  a_{i,j}^{\prime}\right)  $ satisfying $a_{1,1}^{\prime}=a_{1,1}$,
$a_{2,1}^{\prime}=\ldots=a_{l-1,1}^{\prime}=0$. We now apply the previous
procedure to $\left(  a_{i,j}^{\prime}\right)  $, $2\leq i\leq l-1$, $1\leq
j\leq n$. By using this procedure, we construct a matrix $\left(
b_{i,j}\right)  $ which is the row echelon form\ with $rank_{K}$ $\left(
b_{i,j}\right)  =rank_{K}\left(  a_{i,j}\right)  =l-1$ and define%
\[
f_{i,x_{0}}\left(  x\right)  =b_{i,i}x_{i}+\ldots+b_{i,n}x_{n}+\text{ higher
degree terms,}%
\]
for $i=1,\ldots,l-1$. Note that
\[
ord\left(  b_{1,1}\right)  \leq ord\left(  b_{2,2}\right)  \leq\ldots\leq
ord\left(  b_{l-1,l-1}\right)
\]
with $b_{i,i}\neq0$, 1$\leq i\leq l-1$, because $rank_{K}$ $\left(
b_{i,j}\right)  =rank_{K}\left(  a_{i,j}\right)  =l-1$. In addition,
\[
ord(b_{i,i})\leq ord\left(  b_{i,j}\right)  \text{, for }i<j\leq n.
\]
We set
\[
L:=ord\left(  b_{l-1,l-1}\right)  +1<\infty,
\]
and%
\[
f_{i,x_{0}}(\pi^{L}y):=\pi^{e_{i,x_{0},L}}f_{i,x_{0},L}\left(  y\right)  ,
\]
where $e_{i,x_{0},L}=L+ord\left(  b_{i,i}\right)  $, $f_{i,x_{0},L}\left(
y\right)  =c_{i,i}x_{i}+c_{i,i+1}x_{i+1}+\ldots+c_{i,n}x_{n}+$ $($higher
degree terms$)$, $c_{i,i}=ac\left(  b_{i,i}\right)  \in R_{K}^{\times}$, for
$i=1,\ldots,l-1$. Note that $rank_{K}$ $\left(  b_{i,j}\right)  =rank_{K}%
\left(  c_{i,j}\right)  =l-1$, and that $rank_{\overline{K}}\left(
\overline{c}_{i,j}\right)  =l-1$.

Finally, since%
\[
rank_{\overline{K}}\left(  \frac{\partial\overline{f_{i,x_{0},L}}}{\partial
y_{j}}(\overline{x})\right)  =rank_{\overline{K}}\left(  \overline{c}%
_{i,j}\right)  =l-1,
\]
for every $\overline{x}\in V_{x_{0},L}^{(l-1)}\left(  \overline{K}\right)  $,
we conclude that $V_{x_{0},L}^{(l-1)}\left(  R_{K}\right)  $ has good
reduction $\operatorname{mod}P_{K}$.
\end{proof}

In the above proof, we can take for each $x^{\prime}=x_{0}+\left(  P_{K}%
^{L+1}\right)  ^{n}\cap V^{\left(  l-1\right)  }\left(  R_{K}\right)  $ the
same $L$. Thus by the compactness of $V^{\left(  l-1\right)  }\left(
R_{K}\right)  $ there are only finitely many $L$ involved, and consequently,
by taking the maximum of these numbers, we can take $L$ independently of
$x_{0}$. In this way we get the following result.

\begin{proposition}
\label{Neron_resol}Assume that $V^{\left(  l-1\right)  }\left(  R_{K}\right)
$ is a closed submanifold of dimension $n-l+1$. Then, there exist a positive
integer $L$ such that for every $x_{0}\in V^{\left(  l-1\right)  }\left(
R_{K}\right)  $, there exist equations $f_{1,x_{0}}\left(  x\right)
=\ldots=f_{l-1,x_{0}}(x)=0$ \ with coefficients in $R_{K}$ defining
$V^{\left(  l-1\right)  }\left(  R_{K}\right)  $ such that if we write
$f_{i,x_{0}}\left(  x_{0}+\pi^{L}y\right)  =\pi^{e_{i,x_{0},L}}f_{i,x_{0}%
,L}\left(  y\right)  $, with $f_{i,x_{0},L}\left(  y\right)  \in R_{K}\left[
y_{1},\ldots,y_{n}\right]  \setminus P_{K}\left[  y_{1},\ldots,y_{n}\right]
$, for $i=1,\ldots,l-1$,the submanifold $V_{x_{0},L}^{(l-1)}\left(
R_{K}\right)  =\left\{  y\in R_{K}^{n}\mid f_{i,x_{0},L}\left(  y\right)
=0,\text{ }i=1,\ldots,l-1\right\}  $ has \ good reduction $\operatorname{mod}%
P_{K}$.
\end{proposition}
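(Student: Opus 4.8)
The plan is to upgrade the pointwise Lemma to a uniform statement by a compactness argument, after two elementary substitution checks: first, that a value of $L$ which works at a point $x_{0}\in V^{(l-1)}(R_{K})$ also works (with the \emph{same} defining polynomials) at every point of $V^{(l-1)}(R_{K})$ lying in a small ball around $x_{0}$; and second, that a valid $L$ remains valid when it is replaced by any larger integer.

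First I would apply the Lemma at each $x_{0}\in V^{(l-1)}(R_{K})$ to obtain an integer $L(x_{0})$ and polynomials $f_{1,x_{0}},\ldots,f_{l-1,x_{0}}$ over $R_{K}$ defining $V^{(l-1)}(R_{K})$ with the stated properties; after a translation assume $x_{0}=0$, so that $f_{i,x_{0}}(\pi^{L(x_{0})}u)=\pi^{e_{i}}f_{i,x_{0},L(x_{0})}(u)$ with $f_{i,x_{0},L(x_{0})}(u)=c_{i,i}u_{i}+\cdots+(\text{higher degree terms})$, $c_{i,i}\in R_{K}^{\times}$, and the reduced matrix $(\overline{c}_{i,j})$ of rank $l-1$. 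For $x'\in (P_{K}^{L(x_{0})+1})^{n}\cap V^{(l-1)}(R_{K})$ write $x'=\pi^{L(x_{0})}w$ with $w\in (P_{K})^{n}$; then $f_{i,x_{0}}(x'+\pi^{L(x_{0})}y)=\pi^{e_{i}}f_{i,x_{0},L(x_{0})}(w+y)$, so one may take $f_{i,x',L(x_{0})}(y):=f_{i,x_{0},L(x_{0})}(w+y)$. Because $\overline{w}=0$, the reduction of $f_{i,x',L(x_{0})}$ and the reduced Jacobian matrix $\overline{(\partial f_{i,x',L(x_{0})}/\partial y_{j})}$ coincide, as functions on $\overline{K}^{n}$, with those of $f_{i,x_{0},L(x_{0})}$; hence $f_{i,x',L(x_{0})}\notin P_{K}[y]$, one has $V_{x',L(x_{0})}^{(l-1)}(\overline{K})=V_{x_{0},L(x_{0})}^{(l-1)}(\overline{K})$, and the good reduction of the latter submanifold transfers to the former. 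Thus $L(x_{0})$, with the polynomials $f_{i,x_{0}}$, is a valid choice at every point of the ball $x_{0}+(P_{K}^{L(x_{0})+1})^{n}$.

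Next I would verify monotonicity in $L$: if $L$ works at $x_{0}$ with polynomials $f_{i,x_{0}}$, then so does any $L'\geq L$. Indeed $f_{i,x_{0}}(x_{0}+\pi^{L'}y)=\pi^{e_{i}}f_{i,x_{0},L}(\pi^{L'-L}y)$, and since $f_{i,x_{0},L}$ has no constant term its lowest-degree part is linear, so $f_{i,x_{0},L}(\pi^{L'-L}y)=\pi^{L'-L}g_{i}(y)$ where $g_{i}(y)$ is the linear part of $f_{i,x_{0},L}$ plus $\pi^{L'-L}\times(\text{higher order})$. Then $g_{i}\notin P_{K}[y]$ and $\overline{g}_{i}$ is the linear form with coefficient matrix $(\overline{c}_{i,j})$ of rank $l-1$, so $V_{x_{0},L'}^{(l-1)}$ has good reduction (indeed trivially, its reduction being a linear subspace).

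Finally, the balls $x_{0}+(P_{K}^{L(x_{0})+1})^{n}$, $x_{0}\in V^{(l-1)}(R_{K})$, cover the compact set $V^{(l-1)}(R_{K})$; extract a finite subcover indexed by $x_{0}^{(1)},\ldots,x_{0}^{(r)}$ and put $L:=\max_{k}L(x_{0}^{(k)})$. Given any $x_{0}\in V^{(l-1)}(R_{K})$, choose $k$ with $x_{0}\in x_{0}^{(k)}+(P_{K}^{L(x_{0}^{(k)})+1})^{n}$: by the second paragraph the polynomials $f_{i,x_{0}^{(k)}}$ together with the integer $L(x_{0}^{(k)})$ form a valid choice at $x_{0}$, and by the third paragraph so do these polynomials together with the larger integer $L$; this gives the uniform $L$ claimed in the proposition. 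The only point requiring care is tracking the reductions mod $P_{K}$ through the two substitutions, but in each case the perturbation introduced lies in $P_{K}$ and so disappears after reduction; beyond this bookkeeping and the compactness step I do not anticipate a genuine obstacle.
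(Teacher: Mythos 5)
Your proof is correct and follows essentially the same route as the paper: apply the pointwise lemma, observe that one value of $L$ together with the same defining equations serves at every point of the ball $x_{0}+\left(P_{K}^{L+1}\right)^{n}$, and conclude by compactness of $V^{(l-1)}\left(R_{K}\right)$ and taking the maximum over a finite subcover. You actually supply more detail than the paper, whose argument is two sentences; in particular your explicit verification that a valid $L$ remains valid when enlarged to the maximum is a step the paper leaves implicit.
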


\subsection{Bounding $|E(z)|$}

In this section, we use $\rho=\rho(V^{(l-1)},f_{l},\gamma_{GL})$ and $m_{\rho
}=m_{\rho}(V^{(l-1)},f_{l},\gamma_{GL})$ which were defined before, see Remark
\ref{note}.

We set for $z=u\pi^{-m}\in K$, with $u\in R^{\times}$ and $m\in\mathbb{N}%
\setminus\{0\}$, as in the introduction, the exponential sum
\[
E(z)=q^{-m\left(  n-l+1\right)  }\text{{\ \ }}{\sum\limits_{\overline{x}\in
V^{(l-1)}\left(  R_{K}\right)  \operatorname{mod}P_{K}^{m}}}\Psi\left(
zf_{l}(x)\right)  .
\]

\begin{theorem}
\label{Th3} Assume that $C_{f_{l}}\subset f_{l}^{-1}(0)$. Then there exists a
constant $C$ such that
\[
\left\vert E(z)\right\vert \leq C\left\vert z\right\vert _{K}^{-\rho}\left(
\log_{q}\left\vert z\right\vert _{K}\right)  ^{m_{\rho}-1},
\]
for $|z|_{K}>1$.
\end{theorem}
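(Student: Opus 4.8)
The plan is to reduce the exponential sum $E(z)$ to the oscillatory integral $E_{\Phi}(z)$ studied in Theorem \ref{Th2}, for a suitable choice of $\Phi$, and then invoke the bound in Theorem \ref{Th2}(2). The first step is to express $E(u\pi^{-m})$ as an integral. Using the standard fact that for a closed submanifold with good reduction $\operatorname{mod}P_{K}$ the fibers of $\operatorname{mod}P_{K}^{m}$ have equal measure, one has, at least locally, that $q^{-m(n-l+1)}$ times a sum over $V^{(l-1)}(R_{K})\operatorname{mod}P_{K}^{m}$ equals $\int_{V^{(l-1)}(R_{K})}\Psi(zf_{l}(x))|\gamma_{GL}|$ up to a normalizing constant coming from the relation between the Gel'fand–Leray measure and the counting measure on residue classes. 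Concretely, I would take $\Phi$ to be (a constant multiple of) the characteristic function of $V^{(l-1)}(R_{K})$, or more precisely of $R_{K}^{n}$, so that $E_{\Phi}(z)$ and $E(z)$ differ only by a bounded factor independent of $z$.

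The difficulty is that $V^{(l-1)}(R_{K})$ need not have good reduction $\operatorname{mod}P_{K}$, so the clean "fiber = ball of fixed volume" statement fails globally. This is exactly what Proposition \ref{Neron_resol} is for. Second step: fix the integer $L$ from Proposition \ref{Neron_resol}, and partition $V^{(l-1)}(R_{K})$ into the finitely many residue classes $\operatorname{mod}P_{K}^{L}$ meeting it. On each such class containing a point $x_{0}$, change variables $x = x_{0}+\pi^{L}y$; after this substitution the defining equations become $f_{i,x_{0},L}(y)=0$, cutting out a submanifold $V^{(l-1)}_{x_{0},L}(R_{K})$ with good reduction $\operatorname{mod}P_{K}$, and $f_{l}(x_{0}+\pi^{L}y)$ becomes a new analytic function $g_{x_{0}}(y)$ on it. On this piece the good-reduction statement applies, so the portion of $E(z)$ coming from that class is, up to an explicit power of $q$, an oscillatory integral $E_{\Phi_{x_{0}}}(z, V^{(l-1)}_{x_{0},L}, g_{x_{0}}, \gamma_{GL})$ to which Theorem \ref{Th2} applies (one must check $C_{g_{x_{0}}}\subset g_{x_{0}}^{-1}(0)$, which follows from the hypothesis $C_{f_{l}}\subset f_{l}^{-1}(0)$ since the change of variables is an analytic isomorphism and $g_{x_{0}}$ differs from $f_{l}\circ(\text{chart})$ only by an invertible power of $\pi$, which affects neither the critical locus nor the zero locus).

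Third step: apply Theorem \ref{Th2}(2) to each of the finitely many integrals obtained, getting $|E_{\Phi_{x_{0}}}(z)|\le C_{x_{0}}|z|_{K}^{-\rho_{x_{0}}}(\log_q|z|_K)^{m_{\rho_{x_{0}}}-1}$. One then checks that the numerical invariants $\rho_{x_{0}}$ and $m_{\rho_{x_{0}}}$ attached to $(V^{(l-1)}_{x_{0},L}, g_{x_{0}}, \gamma_{GL})$ are dominated by $\rho = \rho(V^{(l-1)},f_{l},\gamma_{GL})$ and $m_{\rho}$: indeed each resolution of $g_{x_{0}}$ on the chart glues into, or is refined by, a resolution of $f_{l}$ on all of $V^{(l-1)}$, so every ratio $v_i/N_i$ occurring locally occurs globally, whence $\rho_{x_{0}}\ge\rho$ and, in the equality case, $m_{\rho_{x_{0}}}\le m_{\rho}$; since $|z|_K>1$ the estimate with exponent $-\rho_{x_0}$ is at most a constant times the one with exponent $-\rho$. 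Summing the finitely many pieces and absorbing all constants into a single $C$ gives $|E(z)|\le C|z|_{K}^{-\rho}(\log_q|z|_K)^{m_\rho-1}$ for $|z|_K>1$, as claimed.

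The main obstacle I anticipate is the bookkeeping in the second step: tracking the precise powers of $q$ and $\pi$ introduced by the substitution $x=x_{0}+\pi^{L}y$ and by the factor $\pi^{e_{i,x_{0},L}}$ relating $f_{i,x_{0}}$ to $f_{i,x_{0},L}$, and making sure that these constants are genuinely independent of $z$ (they are, since $L$ and the finite set of classes are fixed once and for all). A secondary subtlety is the comparison of the local invariants $(\rho_{x_0},m_{\rho_{x_0}})$ with the global $(\rho,m_\rho)$; here part (3) of Theorem \ref{Th1}, which guarantees $\rho$ is resolution-independent, together with the fact that $-\rho$ is actually attained as the real part of a pole for some $\Phi$, is what makes the comparison go through cleanly rather than merely giving an inequality in the wrong direction.
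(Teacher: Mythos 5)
Your proposal follows essentially the same route as the paper's proof: the good-reduction case is handled by identifying $E(z)$ with the oscillatory integral $E_{\Phi}(z,V^{(l-1)},f_{l},\gamma_{GL})$ and invoking Theorem \ref{Th2}(2), and the general case is reduced to it via Proposition \ref{Neron_resol}, the substitution $x=x_{0}+\pi^{L}y$ over the finitely many residue classes $\operatorname{mod}P_{K}^{L}$, and the comparison of the local invariants with the global $\rho$ and $m_{\rho}$, exactly as in the paper's inequalities (\ref{obs2})--(\ref{obs3}). The argument is correct and matches the published one in all essential respects.
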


\begin{proof}
Consider first the case in which $V^{(l-1)}(R_{K})$ has good reduction
$\operatorname{mod}P_{K}$. By applying Lemma 5.5 in \cite{Z3},
\[
E(z)=\int\limits_{V^{(l-1)}(R_{K})}\Psi\left(  zf_{l}\left(  x\right)
\right)  \mid\gamma_{GL}\mid,
\]
now, the announced estimation follows from Theorem \ref{Th2}.

We now consider the general case. We use Proposition \ref{Neron_resol} to
reduce the estimation of $\left\vert E(z)\right\vert $ to the estimation of
several exponential sums along several submanifolds with good reduction
$\operatorname{mod}P_{K}$, as follows. There exists a positive integer $L$
such that for every $x\in V^{(l-1)}(R_{K})$, there exist equations
$f_{1,x},\ldots,f_{l-1,x}$ of $V^{(l-1)}(R_{K})$ such that if we write
$f_{i,x}(x+\pi^{L}y)=\pi^{e_{i,x,L}}f_{i,x,L}(y)$ for $i=1,\ldots,l-1$ , with
$f_{i,x,L}(y)\in R_{K}[y_{1},\ldots,y_{n}]\setminus P_{K}[y_{1},\ldots,y_{n}%
]$, the manifold
\[
V_{x,L}^{(l-1)}(R_{K})=\{y\in R_{K}^{n}\mid f_{i,x,L}(y)=0,i=1,\ldots,l-1\}
\]
has good reduction mod $P_{K}$. Take such an integer $L$. For $x\in
V^{(l-1)}(R_{K})$, we write $f_{l}(x+\pi^{L}y)=f_{l}(x)+\pi^{e_{l,x,L}%
}f_{l,x,L}(y)$, with $f_{l,x,L}(y)\in R_{K}[y_{1},\ldots,y_{n}]\setminus
P_{K}[y_{1},\ldots,y_{n}]$. Note that $e_{l,x,L}\geq L$.

For $z=u\pi^{-m}$, with $u\in R_{K}^{\times}$ and $m>L$, we get
\begin{align*}
\lefteqn{q^{m(n-l+1)}E(u\pi^{-m})}\\
&  =\sum_{\overline{x}\in V^{(l-1)}(R_{K})\operatorname{mod}P_{K}^{m}}%
\Psi(uf_{l}(x)/\pi^{m})\\
&  =\sum_{\overline{x}\in V^{(l-1)}(R_{K})\mathrm{\operatorname{mod}}P_{K}%
^{L}}\text{ \ \ }\sum_{\overline{y}\in V_{x,L}^{(l-1)}(R_{K}%
)\operatorname{mod}P_{K}^{m-L}}\Psi(uf_{l}(x+\pi^{L}y)/\pi^{m})\\
&  =\sum_{\overline{x}\in V^{(l-1)}(R_{K})\operatorname{mod}P_{K}^{L}}\text{
}\Psi(\frac{uf_{l}(x)}{\pi^{m}})\text{\ \ }\sum_{\overline{y}\in
V_{x,L}^{(l-1)}(R_{K})\operatorname{mod}P_{K}^{m-L}}\Psi(\frac{u\pi
^{e_{l,x,L}-L}f_{l,x,L}(y)}{\pi^{m-L}}).
\end{align*}

Therefore,%
\begin{align*}
\left\vert E(u\pi^{-m})\right\vert  &  \leq\\
&  C_{0}\max_{\overline{x}\in V^{(l-1)}(R_{K})\operatorname{mod}P_{K}^{L}%
}\left\vert \sum_{\overline{y}\in V_{x,L}^{(l-1)}(R_{K})\operatorname{mod}%
P_{K}^{m-L}}\Psi(\frac{u\pi^{e_{l,x,L}-L}f_{l,x,L}(y)}{\pi^{m-L}})\right\vert
,
\end{align*}
where $C_{0}=\#\left(  V^{(l-1)}(R_{K})\operatorname{mod}P_{K}^{L}\right)  $.

Since $\theta_{x}:K^{n}\rightarrow K^{n}$, with $\theta_{x}(y)=x+\pi^{L}y$ is
a $K-$analytic isomorphism for any $x\in R_{K}^{n}$, and $\theta_{x}\left(
V_{x,L}^{\left(  l-1\right)  }\left(  R_{K}\right)  \right)  =V^{(l-1)}\left(
R_{K}\right)  \cap\left(  x+\left(  P_{K}^{L}\right)  ^{n}\right)  $, we have%
\begin{align}
\rho &  =\rho(V^{(l-1)},f_{l,},\gamma_{GL})\label{obs2}\\
&  \leq\rho(V_{x,L}^{(l-1)},f_{l,x,L},\gamma_{GL})\text{, for any }%
\overline{x}\in V^{(l-1)}(R_{K})\operatorname{mod}P_{K}^{L}\text{,}\nonumber
\end{align}%
\begin{equation}
\rho=\rho(V_{x,L}^{(l-1)},f_{l,x,L},\gamma_{GL})\text{, for some }\overline
{x}\in V^{(l-1)}(R_{K})\operatorname{mod}P_{K}^{L}\text{,} \label{obs2a}%
\end{equation}%
\begin{align}
m_{\rho}  &  =m_{\rho}(V^{(l-1)},f_{l},\gamma_{GL})\label{obs3}\\
&  \geq m_{\rho}(V_{x,L}^{(l-1)},f_{l,x,L},\gamma_{GL})\text{, for any
}\overline{x}\in V^{(l-1)}(R_{K})\operatorname{mod}P_{K}^{L}\text{.}\nonumber
\end{align}
We now note that all the $V_{x,L}^{(l-1)}\left(  R_{K}\right)  $ have good
reduction $\operatorname{mod}P_{K}$, and then by applying the estimation given
at the beginning of the proof, we have%
\begin{equation}
|E(u\pi^{-m})|\leq C_{0}q^{-m\rho}m^{m_{\rho}-1}\text{, for }m\geq L\geq1.
\label{inequality}%
\end{equation}
Finally, since
\begin{align*}
|E(u\pi^{-m})|  &  \leq q^{-m\left(  n-l+1\right)  }\#\left(  V^{(l-1)}%
(R_{K})\operatorname{mod}P_{K}^{m}\right) \\
&  \leq q^{m\left(  l-1\right)  },
\end{align*}
for every $m$, we can replace $C_{0}$ by%
\[
C:=\max\left\{  \left\{  C_{0}\right\}  \cup\left\{  \frac{q^{m\left(
l-1\right)  }}{q^{-m\rho}m^{m_{\rho}-1}}\mid1\leq m\leq L-1\right\}  \right\}
,
\]
in (\ref{inequality}), and thus the estimation holds for all $m\geq1$.
\end{proof}

\section{\label{secPoincare}Poincar\'{e} Series and Polynomial Congruences
Along $p$-adic Submanifolds of $R_{K}^{n}$}

We define for $m\in\mathbb{N}$ the number $N_{m}=N_{m}\left(  V^{(l-1)}%
,f_{l}\right)  $ as
\begin{equation}
\left\{
\begin{array}
[c]{ll}%
\#\left(  \left\{  \overline{x}\in V^{(l-1)}\left(  R_{K}\right)  \text{
}\operatorname{mod}P_{K}^{m}\mid ord\left(  f_{l}\left(  x\right)  \right)
\geq m\right\}  \right)  & \text{ if }m\geq1\\
& \\
1 & \text{ if }m=0.
\end{array}
\right.  \label{Nm}%
\end{equation}
Note that $ord(f_{l}(x))\geq m$, if and only if $f_{l}(x)\equiv
0\operatorname{mod}P_{K}^{m}$, and therefore, the $N_{m}$ give the number of
solutions of a polynomial congruence along the submanifold $V^{(l-1)}(R_{K})$.
We also define
\[
P\left(  t\right)  :=P\left(  t,V^{(l-1)},f_{l}\right)  ={\sum\limits_{m=0}%
^{\infty}}q^{-m\left(  n-l+1\right)  }N_{m}t^{m}.
\]
If $V^{(l-1)}\left(  R_{K}\right)  $ has good reduction $\operatorname{mod}$
$P_{K}$, then
\[
N_{m}=\#(\left\{  \overline{x}\in\left(  R_{K}/P_{K}^{m}\right)  ^{n}\mid
f_{1}\left(  x\right)  \equiv f_{2}\left(  x\right)  \equiv\ldots\equiv
f_{l}\left(  x\right)  \equiv0\operatorname{mod}P_{K}^{m}\right\}  ).
\]

In the following theorem, we prove the rationality of $P(t)$ and give an upper
bound for the $N_{m}$.

\begin{theorem}
\label{Th4}(1) $P\left(  t\right)  $ is a rational function of $q^{-s}$. (2)
There exists a constant $C$ such that
\[
N_{m}\leq Cq^{(n-l+1-\rho)m}m^{m_{\rho}-1},
\]
for all $m\geq1$.
\end{theorem}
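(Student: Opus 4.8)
The plan is to relate the generating function $P(t)$ to the local zeta function $Z_{\Phi}(s,\chi_{\mathrm{triv}},V^{(l-1)},f_{l},\gamma_{GL})$, and to deduce the bound on $N_m$ from the asymptotic expansion already established in Theorem \ref{Th2}. First I would express $N_m$ as an integral. The key observation is that, for $m\geq 1$,
\[
q^{-m(n-l+1)}N_m=\int\limits_{V^{(l-1)}(R_K)}\mathbf{1}\big(\operatorname{ord} f_l(x)\geq m\big)\,|\gamma_{GL}|,
\]
at least once one knows that on a submanifold with good reduction the induced measure of the fiber $\overline{x}\operatorname{mod}P_K^m$ equals $q^{-m(n-l+1)}$; the general (bad reduction) case is handled by the Néron-type desingularization of Proposition \ref{Neron_resol}, exactly as in the proof of Theorem \ref{Th3}, splitting $V^{(l-1)}(R_K)$ into the finitely many pieces $x_0+(P_K^L)^n$ on which good reduction holds and summing the contributions. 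This requires Lemma 5.5 of \cite{Z3} to pass between the counting measure mod $P_K^m$ and the measure $|\gamma_{GL}|$.

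Next I would connect $P(t)$ to the zeta function. Summing the integral formula against $t^m$ and interchanging sum and integral (justified by uniform convergence for $|t|<1$ as in Igusa's treatment) gives, up to the $m=0$ term and an elementary factor coming from $\sum_{m\leq \operatorname{ord} f_l(x)} t^m$, an expression of the form
\[
P(t)=\frac{1}{1-t}\Big(\,\text{(elementary terms)}+Z_{\Phi_0}(s,\chi_{\mathrm{triv}},V^{(l-1)},f_l,\gamma_{GL})\,\Big),
\]
where $\Phi_0$ is the characteristic function of $R_K^n$ (or a finite sum of such, one per piece in the bad-reduction decomposition). By Theorem \ref{Th1}(1), $Z_{\Phi_0}(s,\chi_{\mathrm{triv}},\ldots)$ is a rational function of $t=q^{-s}$; hence so is $P(t)$, proving part (1). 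This is the standard Igusa/Denef correspondence between Poincaré series and zeta functions, now carried out on $V^{(l-1)}$.

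For part (2), I would instead extract $N_m$ directly via an exponential-sum (Fourier) identity rather than through the poles of $P(t)$, because that route gives the logarithmic factor $m^{m_\rho-1}$ cleanly. Writing the indicator of $\operatorname{ord} f_l(x)\geq m$ as an average of additive characters,
\[
\mathbf{1}\big(\operatorname{ord} f_l(x)\geq m\big)=q^{-m}\sum_{y\in R_K/P_K^m}\Psi\!\big(yf_l(x)/\pi^m\big),
\]
and summing over $\overline{x}\in V^{(l-1)}(R_K)\operatorname{mod}P_K^m$, one obtains
\[
q^{-m(n-l+1)}N_m=q^{-m}\sum_{y\in R_K/P_K^m} E\big(y/\pi^m\big)
\]
in the notation of Section 5, where $E(y/\pi^m)=1$ when $\operatorname{ord}(y)\ge ?$... more precisely one groups the $q^m$ values of $y$ by their valuation $j=\operatorname{ord}(y)$ and writes the sum as $\sum_{j=0}^{m} (\text{number of } y \text{ with that valuation})\,E(\text{unit}\cdot\pi^{j-m})$, with the $j=m$ term contributing the leading $q^{-m}$ constant. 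Applying the estimate $|E(z)|\leq C|z|_K^{-\rho}(\log_q|z|_K)^{m_\rho-1}$ from Theorem \ref{Th3} to each term with $|z|_K=q^{m-j}>1$, and bounding the resulting geometric-type sum $\sum_{j<m} q^{-m+j}\cdot q^j\cdot q^{-(m-j)\rho}(m-j)^{m_\rho-1}$ (using $\rho>0$, which holds since all numerical data $v_i,N_i$ are positive), one finds that the dominant term is of order $q^{-m\rho}m^{m_\rho-1}$, giving $N_m\leq C' q^{(n-l+1-\rho)m}m^{m_\rho-1}$ after adjusting the constant to absorb the finitely many small $m$, just as at the end of the proof of Theorem \ref{Th3}. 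The main obstacle is the bookkeeping in the bad-reduction case: one must check that the $L$ from Proposition \ref{Neron_resol} does not interfere with the character-sum manipulation, i.e. that the decomposition into good-reduction pieces is compatible with the Fourier identity for $N_m$; this is the same mechanism as in Theorem \ref{Th3} and should go through with the constant $C$ depending on $L$ and on $\#\big(V^{(l-1)}(R_K)\operatorname{mod}P_K^L\big)$.
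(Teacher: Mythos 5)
Your treatment of part (1) is essentially the paper's: in the good-reduction case one invokes the identity $P(t)=\bigl(1-tZ_{\Phi}(s,\chi_{\mathrm{triv}},V^{(l-1)},f_{l})\bigr)/(1-t)$ with $\Phi$ the characteristic function of $R_{K}^{n}$ (Lemma 5.3 of \cite{Z3}), and the general case is reduced to finitely many good-reduction pieces via Proposition \ref{Neron_resol}, exactly as in Theorem \ref{Th3}; rationality then follows from Theorem \ref{Th1}(1). That part is sound.

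Part (2) has a genuine gap. The paper deduces the bound on $N_{m}$ from the \emph{same} rational identity, by expanding $(1-tZ(t))/(1-t)$ in partial fractions: since every pole of $Z(s,\chi_{\mathrm{triv}})$ has real part $\leq-\rho$, the coefficient of $t^{m}$ is $O(q^{-\rho m}m^{m_{\rho}-1})$, with no further hypotheses. Your detour through the Fourier identity and Theorem \ref{Th3} fails on two counts. First, Theorem \ref{Th3} assumes $C_{f_{l}}\subset f_{l}^{-1}(0)$, a hypothesis Theorem \ref{Th4} does not make, so at best you prove a conditional statement. Second, and more seriously, bounding $q^{-m}\sum_{y}E(y/\pi^{m})$ term by term destroys cancellation that the partial-fraction argument keeps. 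The single term $y=0$ contributes $q^{-m}\cdot q^{-m(n-l+1)}\#\bigl(V^{(l-1)}(R_{K})\operatorname{mod}P_{K}^{m}\bigr)\asymp q^{-m}$ to $q^{-m(n-l+1)}N_{m}$, which already exceeds the target $q^{-\rho m}m^{m_{\rho}-1}$ whenever $\rho>1$ --- and $\rho>1$ does occur: take $n=5$, $l=2$, $f_{1}=x_{5}$, and $f_{2}$ an anisotropic quaternary quadratic form in $x_{1},\ldots,x_{4}$, for which $C_{f_{2}}=\{0\}\subset f_{2}^{-1}(0)$, $\rho=2$, and $q^{-4m}N_{m}\asymp q^{-2m}$. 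There the terms $E(y/\pi^{m})$ with $\operatorname{ord}(y)=j$ have modulus $\asymp q^{-2(m-j)}$ and must cancel against the $y=0$ term down to $q^{-2m}$; the triangle inequality only yields $N_{m}=O(q^{3m})$ instead of the claimed $O(q^{2m})$. Even at the boundary $\rho=1$ your sum $\sum_{k}q^{k(1-\rho)}k^{m_{\rho}-1}$ produces an extra factor of $m$. (There is also the smaller issue that identifying the inner sum over $\overline{x}\operatorname{mod}P_{K}^{m}$ with $E(u\pi^{j-m})$, which is normalized $\operatorname{mod}P_{K}^{m-j}$, requires equidistribution of the reduction fibers, i.e.\ again the good-reduction decomposition.) For part (2) you should drop the exponential-sum route and read the bound off the partial-fraction expansion of $P(t)$, transported through (\ref{lastform2}) and (\ref{obs2})--(\ref{obs3}) in the bad-reduction case, as the paper does.
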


\begin{proof}
We first prove (1) and (2) assuming that $V^{(l-1)}(R_{K})$ has good reduction
mod $P_{K}$. By Lemma 5.3 in \cite{Z3},
\begin{equation}
P\left(  t\right)  =\frac{1-tZ_{\Phi}\left(  s,\chi_{\mathrm{triv}}%
,V^{(l-1)},f_{l}\right)  }{1-t}, \label{for2}%
\end{equation}
with $t=q^{-s}$ and $\Phi$ the characteristic function of $R_{K}^{n}$. The
rationality follows from Theorem \ref{Th1} and the upper bound follows from
(\ref{for2}) by expanding the right in partial fractions.

For the general case, we \ use Proposition \ref{Neron_resol} as in the proof
of Theorem \ref{Th3}. For $m>L$, one gets
\[
N_{m}=\sum_{\overline{x}\in V^{(l-1)}(R_{K})\operatorname{mod}P_{K}^{L}%
}\#\{\overline{y}\in V_{x,L}^{(l-1)}(R_{K})\operatorname{mod}P_{K}^{m-L}\mid
f_{l}(x+\pi^{L}y)\equiv0\operatorname{mod}P_{K}^{m}\}.
\]
If $f_{l}(x+\pi^{L}y)=0$ has no solution in $R_{K}^{n}$, then for $m$ big
enough, the congruence $f_{l}(x+\pi^{L}y)\equiv0\operatorname{mod}P_{K}^{m}$
has no solutions. Thus, there exists a natural number $m_{0}\geq L$ such that
if $m\geq m_{0}$, then $N_{m}$ equals%

\[
\sum_{\overline{x}\in V^{(l)}(R_{K})\operatorname{mod}P_{K}^{L}}%
\#\{\overline{y}\in V_{x,L}^{(l-1)}(R_{K})\operatorname{mod}P_{K}^{m-L}\mid%
\begin{array}
[c]{c}%
f_{l}(x+\pi^{L}y)\equiv\\
0\operatorname{mod}P_{K}^{m}%
\end{array}
\}\text{ }%
\]%
\begin{align}
&  =\sum_{\overline{x}\in V^{(l)}(R_{K})\operatorname{mod}P_{K}^{L}%
}\#\{\overline{y}\in V_{x,L}^{(l-1)}(R_{K})\operatorname{mod}P_{K}^{m-L}\mid%
\begin{array}
[c]{c}%
\pi^{e_{l,x,L}}f_{l,x,L}(y)\equiv\\
0\operatorname{mod}P_{K}^{m}%
\end{array}
\}\nonumber\\
&  =\sum_{\overline{x}\in V^{(l)}(R_{K})\operatorname{mod}P_{K}^{L}%
}\#\{\overline{y}\in V_{x,L}^{(l-1)}(R_{K})\operatorname{mod}P_{K}^{m-L}\mid%
\begin{array}
[c]{c}%
\pi^{e_{l,x,L}-L}f_{l,x,L}(y)\equiv\\
0\operatorname{mod}P_{K}^{m-L}%
\end{array}
\}. \label{lastform2}%
\end{align}

We now prove (1) as follows: since all the submanifolds $V_{x,L}^{(l-1)}%
(R_{K})$ have good reduction $\operatorname{mod}P_{K}$, by the rationality of
$P\left(  t,V_{x,L}^{(l-1)},\pi^{e_{l,x,L}-L}f_{l,x,L}\right)  $, there exists
a constant $M_{0}\left(  x\right)  $ such that all the numbers $N_{m}\left(
V_{x,L}^{(l-1)},\pi^{e_{l,x,L}-L}f_{l,x,L}\right)  $ satisfy a linear
recurrence for $m>M_{0}\left(  x\right)  $ and for all $\overline{x}\in
V^{(l)}(R_{K})\operatorname{mod}P_{K}^{L}$. Therefore, by (\ref{lastform2}),
the numbers $N_{m}$ satisfy a linear recurrence for $m$ big enough, and the
corresponding Poincar\'{e} series is rational.

Finally, we establish the announced bound for the $N_{m}$. Since the bound
holds for the $N_{m}\left(  V_{x,L}^{(l-1)},\pi^{e_{l,x,L}-L}f_{l,x,L}\right)
$, \ using (\ref{lastform2}) and (\ref{obs2})-(\ref{obs3}), one gets%
\begin{equation}
N_{m}\leq C_{0}q^{(n-l+1-\rho)m}m^{m_{\rho}-1}, \label{lastfom1}%
\end{equation}
for all $m>M_{1}>1$, for some positive constants $C_{0}$ and $M_{1}%
\in\mathbb{N}$. We now take%
\[
C:=\max\left\{  \left\{  C_{0}\right\}  \cup\left\{  \frac{N_{m}%
}{q^{(n-l+1-\rho)m}m^{m_{\rho}-1}}\mid1\leq m\leq M_{1}\right\}  \right\}  .
\]
Finally, we can replace $C_{0}$ by $C$ in (\ref{lastfom1}), to obtain a bound
valid for any $m\geq1$.
\end{proof}

\begin{remark}
\label{conjecture}Let $h(x_{1},\ldots,x_{n})$, $g_{i}(x_{1},\ldots,x_{n})$,
$i=1,\ldots,l$ be non-constant polynomials with coefficients in $R_{K}$. Set
\[
S(R_{K})=\left\{  x\in R_{K}^{n}\mid g_{i}(x)=0\text{, }i=1,\ldots,l\right\}
.
\]
Assume that $S(R_{K})$ is a $K$-analytic subset of $R_{K}^{n}$\ of dimension
$d$, see \ \cite{O} for this notion, and that $h\mid_{S(R_{K})}\not \equiv 0$.
We define $N_{m}\left(  S,h\right)  $ as in (\ref{Nm}), and
\[
P\left(  t,S,h\right)  ={\sum\limits_{m=0}^{\infty}}q^{-md}N_{m}\left(
S,h\right)  t^{m}\text{.}%
\]
A general result due to Denef \ implies the rationality of $P\left(
t,S,h\right)  $, see \cite[Theorem 1.6.1]{De}. Indeed, to see this, consider
the set of all $y$ in $R_{K}^{n}$ , such that there exists $x$ in $R_{K}^{n}$
such that%
\[
g_{1}(x)=0,\ldots\text{ },g_{l}(x)=0,\text{ and }ord(x-y)\geq m\text{, and
}ord\left(  h(y)\right)  \geq m.
\]
This set depends on a positive integer $m$, and is definable by a formula in
predicate logic. By Considering its measure and applying Theorem 1.6.1 of
Denef in \cite{De}, the rationality of $P\left(  t,S,h\right)  $ is
established. Note that this result does not give information about the `poles'
of $P\left(  t,S,h\right)  $, and thus estimations for $N_{m}\left(
S,h\right)  $ cannot be obtained from it directly.
\end{remark}

\section{$Z_{\Phi}(\omega,V^{(l-1)},f_{l})$ as a limit of integrals over
$K^{n}$}

As before, we take converging power series $f_{i}\in K[[x_{1},\ldots,x_{n}]]$
for $i=1,\ldots,l-1$ on an open and compact subset $U$ of $K^{n}$, and assume
that $V^{(l-1)}=\{x\in U\mid f_{i}(x)=0$, for $1\leq i\leq l-1\}$ is a closed
$K$-analytic submanifold of $U$. In this section, we specialize $\Theta$ to a
Gel'fand-Leray form $\gamma_{GL}$ on $V^{(l-1)}$. We consider on $K^{n}$ the
measure $|dx|$ associated to the differential form $dx=dx_{1}\wedge
\cdots\wedge dx_{n}$, which is the Haar measure on $K^{n}$ so normalized that
$R_{K}^{n}$ has measure $1$.

The second author proved in \cite[Lemma 2.5]{Z3} that
\[
Z_{\Phi}(\omega,V^{(l-1)},f_{l})=\int_{K^{n}}\Phi\left(  x\right)
\delta\left(  f_{1}\left(  x\right)  ,\ldots,f_{l-1}\left(  x\right)  \right)
\omega\left(  f_{l}\left(  x\right)  \right)  \left\vert dx\right\vert ,
\]
for $\omega\in\Omega_{0}(K^{\times})$, where $\delta$ is the Dirac delta
function. We recall that for a Bruhat-Schwartz function $\theta$,
\[
\int_{K^{n}}\theta(x)\delta(f_{1}(x),\ldots,f_{l-1}(x))|dx|=\lim
_{r\rightarrow+\infty}\int_{K^{n}}\theta(x)\delta_{r}(f_{1}(x),\ldots
,f_{l-1}(x))|dx|,
\]
where the functions $\delta_{r}$ for $r\in\mathbb{N}$ are defined by
\[
\delta_{r}\left(  u\right)  =\left\{
\begin{array}
[c]{lll}%
0 & \text{if} & u\notin\left(  \pi^{r}R_{K}\right)  ^{l-1}\\
&  & \\
q^{r(l-1)} & \text{if} & u\in\left(  \pi^{r}R_{K}\right)  ^{l-1}.
\end{array}
\right.
\]

Note that $\omega\left(  f_{l}\left(  x\right)  \right)  $ is not a
Bruhat-Schwartz function.

From an intuitive point of view, the zeta functions considered here are
defined by concentrating classical Igusa's zeta functions on submanifolds, see
\cite{G-S}, \cite{Z3}.\ In this way, several possible definitions for the
local zeta function along a submanifold appear, and then, a natural question
is to know if these definitions are equivalent. The next proposition answers a
question posed in \cite{Z3}.

\begin{theorem}
\label{pro1}For $\omega\in\Omega_{0}(K^{\times})$,
\[
Z_{\Phi}(\omega,V^{(l-1)},f_{l})=\lim_{r\rightarrow+\infty}\int_{K^{n}}%
\Phi\left(  x\right)  \delta_{r}\left(  f_{1}\left(  x\right)  ,\ldots
,f_{l-1}\left(  x\right)  \right)  \omega\left(  f_{l}\left(  x\right)
\right)  \left\vert dx\right\vert .
\]

\end{theorem}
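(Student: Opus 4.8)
\textbf{Proof proposal for Theorem \ref{pro1}.}

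The plan is to reduce the statement to a local computation via a partition of unity subordinate to a finite covering on which $V^{(l-1)}$ admits nice local coordinates, and then to compute both sides on each chart explicitly. First I would invoke the implicit function theorem for $p$-adic analytic manifolds: since $V^{(l-1)}$ is a closed submanifold of dimension $m=n-l+1$, the support of $\Phi$ can be covered by finitely many open compact polydiscs on each of which, after a permutation of the coordinates, the Jacobian matrix $(\partial f_i/\partial x_j)_{1\le i\le l-1,\ n-l+2\le j\le n}$ is invertible, so that $f_1,\ldots,f_{l-1}$ together with the remaining $m$ coordinates $x' := (x_1,\ldots,x_m)$ form an analytic coordinate system. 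Writing $\Phi = \sum_\alpha \Phi_\alpha$ with each $\Phi_\alpha$ supported in such a chart, and using linearity of both sides, it suffices to prove the identity for a single $\Phi$ supported in one chart.

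On such a chart I would change variables to $(x', w)$ with $w = (f_1(x),\ldots,f_{l-1}(x))$; the Jacobian of this change of variables is, up to the unit $\varepsilon(x')$ coming from the Gel'fand--Leray normalization $\gamma_{GL}\wedge\bigwedge_{i=1}^{l-1}df_i = \bigwedge_{i=1}^n dx_i$, exactly the measure-theoretic factor relating $|dx|$ to $|dx'|\,|dw|$, and on $V^{(l-1)}$ (i.e. $w=0$) the form $\gamma_{GL}$ restricts to $\varepsilon(x',0)^{-1}dx'$ independently of the choice of Gel'fand--Leray form. Then for fixed $r$ the integral on the right becomes
\[
\int_{K^n}\widetilde{\Phi}(x',w)\,\delta_r(w)\,\omega\bigl(\widetilde{f_l}(x',w)\bigr)\,|dx'|\,|dw|
= q^{r(l-1)}\int_{(\pi^rR_K)^{l-1}}\left(\int_{K^m}\widetilde{\Phi}(x',w)\,\omega\bigl(\widetilde{f_l}(x',w)\bigr)\,|dx'|\right)|dw|,
\]
where $\widetilde{\Phi}$, $\widetilde{f_l}$ denote the functions $\Phi$, $f_l$ expressed in the new coordinates. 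The outer integral is the average over the polydisc $(\pi^rR_K)^{l-1}$ of volume $q^{-r(l-1)}$ of the inner integral, which is a continuous function of $w$; hence as $r\to+\infty$ it converges to the value at $w=0$, namely $\int_{K^m}\widetilde{\Phi}(x',0)\,\omega(\widetilde{f_l}(x',0))\,|dx'|$, which is precisely $Z_\Phi(\omega,V^{(l-1)},f_l)$ restricted to this chart. Summing over $\alpha$ gives the full identity.

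The main technical obstacle is the justification of passing the limit through the inner integral and, more precisely, controlling the inner integral $I(w) := \int_{K^m}\widetilde{\Phi}(x',w)\,\omega(\widetilde{f_l}(x',w))\,|dx'|$ near $w=0$: since $\omega(f_l(x))$ is not a Bruhat--Schwartz function and $f_l$ may have zeros on $V^{(l-1)}$, one cannot naively bound the integrand by an integrable majorant uniformly in $w$. To handle this I would note that $\widetilde{\Phi}$ has compact support, write $I(w)$ as a sum over the (finitely many, by compactness) values of $\mathrm{ord}(x'-a)$ needed, and use that for $\omega\in\Omega_0(K^\times)$, i.e. $\sigma(\omega)>0$, the function $z\mapsto\omega(z)$ is integrable near $0$; combined with the continuity (indeed local constancy off the zero set) of $\widetilde{f_l}$ in all variables, a dominated-convergence argument — or, following \cite{Z3}, comparison with an embedded resolution as in Theorem \ref{Th1} which makes $I(w)$ a finite $\mathbb{C}$-linear combination of monomial integrals depending analytically on $w$ — shows $I(w)\to I(0)$ as $w\to 0$. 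This is exactly the point at which the hypothesis $\omega\in\Omega_0(K^\times)$ is used, and it is the only place where genuine care is required; everything else is a bookkeeping of charts and Jacobians.
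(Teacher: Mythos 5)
Your proposal is correct and follows essentially the same route as the paper: reduce to a single chart via implicit-function-theorem coordinates in which $f_1,\ldots,f_{l-1}$ become $l-1$ of the variables, rewrite the $\delta_r$-integral as an average of the fibre integral over a shrinking polydisc, and pass to the limit by dominated convergence using that $\left\vert \omega\left( z\right) \right\vert =\left\vert z\right\vert _{K}^{\sigma\left( \omega\right) }$ with $\sigma\left( \omega\right) >0$ is bounded (indeed tends to $0$) near $z=0$. The only difference is cosmetic: the paper rescales $y_{i}\mapsto\pi^{r}y_{i}$ and applies dominated convergence to the full integral over a fixed compact domain with a constant majorant, which sidesteps the separate continuity-of-$I(w)$ discussion that you (correctly, but somewhat more laboriously) supply, and makes the appeal to an embedded resolution unnecessary.
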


\begin{proof}
Let $b\in V^{(l-1)}\left(  K\right)  $. After a possibly reordering of the
coordinates $x_{1},\ldots,x_{n}$, there exists a local coordinate change
around $b$ of the form $y=\left(  y_{1},\ldots,y_{n}\right)  =\phi\left(
x\right)  $, with
\[
y_{i}:=\left\{
\begin{array}
[c]{lll}%
f_{i}\left(  x\right)  & \text{if} & i=1,\ldots,l-1\\
&  & \\
x_{i}-b_{i} & \text{if} & i=l,\ldots,n,
\end{array}
\right.
\]
since $V^{(l-1)}\left(  K\right)  $ is a submanifold. For $d$ large enough, we
have an open and compact neighborhood $W$ of $b$ such that $\phi
:W\rightarrow\left(  \pi^{d}R_{K}\right)  ^{n}$ is a $K$-analytic isomorphism
satisfying $\left\vert J(x)\right\vert _{K}=\left\vert J(b)\right\vert _{K}$,
for any $x\in W$, where $J(x)$ is the Jacobian of $\phi$. It is sufficient to
prove the theorem for $\Phi$ equal to the characteristic function of a such
set $W$, since there exists finitely many disjoint subsets $W$ as above on
which $\Phi$ is constant, and which cover $V^{(l-1)}$. From now on, we suppose
that $\Phi$ is the characteristic function of $W$. By using $y=\phi\left(
x\right)  $\ as a change of variables,
\begin{align*}
I_{r}\left(  \omega\right)   &  :=\int_{K^{n}}\Phi\left(  x\right)  \delta
_{r}\left(  f_{1}\left(  x\right)  ,\ldots,f_{l-1}\left(  x\right)  \right)
\omega\left(  f_{l}\left(  x\right)  \right)  \left\vert dx\right\vert \\
&  =\left\vert J(b)\right\vert _{K}^{-1}\int\limits_{\left(  \pi^{d}%
R_{K}\right)  ^{n}}\delta_{r}\left(  y_{1},\ldots,y_{l-1}\right)
\omega\left(  \widetilde{f}_{l}\left(  y_{1},\ldots,y_{n}\right)  \right)
\mid dy_{1}\ldots dy_{n}\mid,
\end{align*}
where $\widetilde{f}_{l}:=f_{l}\circ\phi^{-1}$. Since $\delta_{r}\left(
y_{1},\ldots,y_{l-1}\right)  =q^{r\left(  l-1\right)  }$ if and only if
$y_{i}\in\pi^{r}R_{K}$ for $i=1,\ldots,l-1$, and by assuming that $r\geq d$,
we obtain
\[
I_{r}\left(  \omega\right)  =\left\vert J(b)\right\vert _{K}^{-1}q^{r\left(
l-1\right)  }\int\limits_{\left(  \pi^{r}R_{K}\right)  ^{l-1}\times\left(
\pi^{d}R_{K}\right)  ^{n-l+1}}\omega\left(  \widetilde{f}_{l}\left(
y_{1},\ldots,y_{n}\right)  \right)  \mid dy_{1}\ldots dy_{n}\mid
\]%
\[
=\left\vert J(b)\right\vert _{K}^{-1}\int\limits_{R_{K}^{l-1}\times\left(
\pi^{d}R_{K}\right)  ^{n-l+1}}\omega\left(  \widetilde{f}_{l}\left(  \pi
^{r}y_{1},\ldots,\pi^{r}y_{l-1},y_{l},\ldots,y_{n}\right)  \right)  \mid
dy_{1}\ldots dy_{n}\mid.
\]

Finally, by using that $\omega\in\Omega_{0}\left(  K^{\times}\right)  $ and
the Lebesgue dominated convergence theorem,
\begin{align*}
\lim_{r\rightarrow+\infty}I_{r}\left(  \omega\right)   &  =\left\vert
J(b)\right\vert _{K}^{-1}\int\limits_{\left(  \pi^{d}R_{K}\right)  ^{n-l+1}%
}\omega\left(  \widetilde{f}_{l}\left(  0,\ldots,0,y_{l},\ldots,y_{n}\right)
\right)  \mid dy_{l}\ldots dy_{n}\mid\\
&  =\int\limits_{V^{(l-1)}}\Phi\left(  x\right)  \omega\left(  f_{l}\left(
x\right)  \right)  \mid\gamma_{GL}\mid.
\end{align*}

\end{proof}

\begin{remark}
The previous result is still valid if we replace $\omega(f_{l}(x))$ by any
continuous complex-valued function, in particular,
\begin{align*}
E_{\Phi}(z)  &  =\int\limits_{V^{(l-1)}}\Phi(x)\Psi(zf_{l}(x))|\gamma_{GL}|\\
&  =\int_{K^{n}}\Phi\left(  x\right)  \delta\left(  f_{1}\left(  x\right)
,\ldots,f_{l-1}\left(  x\right)  \right)  \Psi\left(  zf_{l}\left(  x\right)
\right)  \left\vert dx\right\vert \\
&  =\lim_{r\rightarrow+\infty}\int_{K^{n}}\Phi\left(  x\right)  \delta
_{r}\left(  f_{1}\left(  x\right)  ,\ldots,f_{l-1}\left(  x\right)  \right)
\Psi\left(  zf_{l}\left(  x\right)  \right)  \left\vert dx\right\vert .
\end{align*}

\end{remark}

\end{document}